\documentclass[11pt]{article}
\usepackage{amsmath}
\usepackage{amsfonts}
\usepackage{amssymb}
\usepackage{epsfig}
\usepackage[font=normalsize]{subfig}
\usepackage{amscd}
\usepackage{latexsym}
\usepackage[colorlinks=black,linkcolor=red,anchorcolor=blue,citecolor=green]{hyperref}
\usepackage{CJK}
\usepackage{color}
\usepackage{leftidx}
\usepackage{multirow}
\usepackage{booktabs}
\usepackage{threeparttable}

%\usepackage[dvips]{graphicx}
% \pagestyle{empty}
%\date{}
%\bibliographystyle{abbrv}
%\bibliography{//Jacobi/lshen/wvu/bib/shen}
%\bibliography{C:/chen/wavelet/shen}
\setlength{\topmargin}{-0.5in}
\setlength{\evensidemargin}{0cm}
\setlength{\oddsidemargin}{0cm}
\setlength{\textheight}{23.0cm}
\setlength{\textwidth}{17cm}
%\addtolength{\topmargin}{0in}
%\addtolength{\hoffset}{1cm}
%\addtolength{\textheight}{2cm}
%\addtolength{\textwidth}{2cm}
\newtheorem{theorem}{\bf Theorem}[section]
\newtheorem{lemma}[theorem]{\bf Lemma}

\newtheorem{example}[theorem]{\bf Example}

\newtheorem{algorithm}[theorem]{\bf Algorithm}

\newenvironment{proof}{\noindent{\em Proof:}}{\quad \hfill$\Box$\vspace{2ex}}

%\renewcommand{\thefigure}{\thesection.\arabic{figure}}

%\renewcommand{\thetable}{\thesection.\arabic{table}}

%\usepackage{pstrick}
% Functions
 % or use in paper directly \operatorname{erf}

\DeclareMathOperator{\diag}{diag}
\DeclareMathOperator{\Log}{Log}

% Spaces
\def \aN {\mathbb N}

% Operators

\def \bL {\mathcal{L}}

\def \bO {\mathcal{O}}

\def \bD {\mathcal{D}}

% Matrices and vectors

\def \Be {\textbf{e}}
\def \Bf {\textbf{f}}

\def \Bx {\textbf{x}}

\def \Bq {\textbf{q}}
\def \Bh {\textbf{h}}

\newcommand{\Rnum}[1]{\uppercase\expandafter{\romannumeral #1\relax}}
\newcommand{\rnum}[1]{\lowercase\expandafter{\romannumeral #1\relax}}
\begin{document}
\begin{CJK}{GBK}{song}

\title{\bf A Levin method for logarithmically singular oscillatory integrals }%Study Report 8
\author{ Yinkun Wang
         \footnotemark[1],
         Shuhuang Xiang
         \footnotemark[2]
         \footnotemark[3]
         }

\renewcommand{\thefootnote}{\fnsymbol{footnote}}

\footnotetext[1]{Department of Mathematics, National University of Defense Technology, Changsha, P. R. China.}
\footnotetext[2]{School of Mathematics and Statistics, Central South University, Changsha, Hunan, P. R. China.}
\footnotetext[3]{Correspondence author: xiangsh@mail.csu.edu.cn }
\date{}
\maketitle{}
\begin{abstract}
We propose a new stable Levin method to compute oscillatory integrals with logarithmic singularities and without stationary points.
To avoid the singularity, we apply the technique of singularity separation and transform the singular ODE into two non-singular ODEs, which can be solved efficiently by the collocation method.
Applying the equivalency of the new Levin method for the singular oscillatory integrals and  the Filon method when the oscillator is linear, we consider the convergence  of the new Levin method. This new method shares the proposition that
 less error for higher oscillation. Several numerical experiments are presented to validate the efficiency of the proposed method.
\end{abstract}

\textbf{Key words}: Levin method; highly oscillatory integral; logarithmically singularity
%
%\textbf{AMS subject classifications:}  65R20, 45L05

\section{Introduction}
We consider in this paper highly oscillatory integrals of Fourier type of the form
\begin{equation}
    I_{\log,w}^{[0,a]}[f,g]:=\int_0^af(x)\log xe^{iwg(x)}dx,
\end{equation}
where $a$ is a positive real number, $f$ and $g$ are suitably smooth, $g'(x)\neq 0, x\in[0,a]$ and $w$ is a real parameter whose absolute value can be extremely large.
%For the sake of simplicity, we will assume throughout this paper that $a>0$ and $w>0$.
If the integral is over another bounded domain $[a, b]$ with finitely many logarithmically singular points, it can
be written in sums of integrals of the form $I_{\log,w}^{[0,a]}[f,g]$ and non-singular integrals of the form $I_{w}^{[0,a]}[f,g]:=\int_0^af(x)e^{iwg(x)}dx$.
{\color{black}
Oscillatory integrals with some logarithmic singularities occurs frequently in the numerical
process of solving many problems of science and engineering such as electromagnetic and acoustic
scattering.
Since the antiderivatives of the integrands are unknown in most of cases, they have to
be computed numerically.
However, the high oscillation and the weak singularity of the integrands make
the classic numerical integral methods such as Gauss quadrature hard to derive an acceptable approximation
within a limited cost. The computation
of integrals of this type $I_{\log,w}^{[0,a]}[f,g]$ is regarded as a challenging issue which requires special focus.
}

Many effective methods have been proposed for the oscillatory integrals in order to overcome the difficulty caused by the high oscillation such as Filon-type methods \cite{ISERLES2004,ISERLES2005N,XIANG2007}, Levin methods \cite{LEVIN1982,OLVER2006}, the generalized quadrature rule \cite{EVANS2000}, numerical steepest descent methods \cite{HUYBRECHES2006}. We refer the interested reader to \cite{HUYBRECHES2009} for a review of these methods. {\color{black} There is a series of papers that develop quadratures for non-oscillatory integrals with singularities by using graded meshes \cite{1994XU}, or by Euler-Maclaurin summation formula \cite{ALPERT1995,KAPUR1997,ROKHLIN1990,STARR1991}.
An interesting hybrid Gauss-Trapezoidal quadrature rule  was introduced in  Alpert \cite{alpert1999hybrid} for the integrand with algebraic or logarithmic singularity, and for improper integrals with oscillatory weight $e^{i\gamma x
}$, where the quadrature nodes and weights  are computed by solving a nonlinear system. It is high time-consuming especially for highly oscillatory integral since the minimum sampling was  taken to be two points per period.  The hybrid Gauss-trapezoidal rule is quite accurate for integrand without highly oscillation, but fails to computation of the highly oscillatory integral when the frequency is much bigger than the number of the nodes (see Table 1).}

\begin{table}[!h]
 \tabcolsep 0pt \caption{Relative errors for the singular case $\int_0^1\left[\cos(wx)s(x)+\cos(wx+0.3)\right]dx$ with $s(x)=\log x$. Here the error is of order $O(h^{\ell}\log h)$ and $m=n+j+k$, where  $h=1/(n+a+b-1)$ and $(\ell,j,k,a,b)$ is shown and $f$ denotes the oversampling factor } \vspace*{-12pt}
 \textcolor{black} {
 \begin{center}{
 \footnotesize
\def\temptablewidth{1\textwidth}  {\rule{\temptablewidth}{1pt}}
\begin{tabular*}{\temptablewidth}{@{\extracolsep{\fill}} cccccc}
&&&$w=200$&&  \\   \hline
$m$ & $f$       &   (2,1,1,1,1)             & (4,3,2,2,2)            &   (8,7,4,5,4)         &  (16,16,8,10,7)        \\   \hline
70&1.10&  1.3983e-01       &    2.1661e-02        &  3.5405e-02     &1.3333e-04    \\
160&2.51&  1.5578e-02     &   1.8820e-03         &  2.7201e-05      &  2.0157e-10 \\
260&4.08&  4.1727e-03     &   2.5842e-04          &  7.7766e-07     &  3.6675e-15  \\ \hline
&&&$w=2000$&&  \\   \hline
$m$ & $f$       &   (2,1,1,1,1)             & (4,3,2,2,2)            &   (8,7,4,5,4)         &  (16,16,8,10,7)        \\   \hline
70&0.11& 54.5467      &    7.4588        &   135.0606     &144.2557    \\
160&0.25&   978.7887    & 69.4319      & 38.0776      &  40.4790 \\
260&0.41&  13.4481     & 36.3667          &  38.5161     &  4.1813  \\
  \end{tabular*}
{\rule{\temptablewidth}{1pt}}}
\end{center}
}
\end{table}

We next review the development of quadratures regarding both the properties of oscillation and logarithmic singularity.
The asymptotic behavior of Fourier integrals involving logarithmic singularities was obtained by repeated integration by parts in \cite{Erdelyi1956,MCKENNA1967}. This method is unstable.
To overcome the difficulty of the singularity  completely, a new Filon method was proposed in \cite{PIESSENS1992,XAINGGUO2014,DOMINGUEZ2014,KANG2015}
% \cite{KANG2013},
for the case when the modified moments including the singularity can be obtained numerically for the linear oscillator $g$.
 However, it might be impossible for the general cases of the oscillator $g$ since the modified moments are hard to be obtained.

 Recently,  by partition of the integration interval based on the singularity and the oscillation of the integrand, the composite Filon methods were considered in \cite{DOMINGUEZ2013} and \cite{MAXU2017} based on the Filon--Clenshaw--Curtis quadrature and moment-free Filon method \cite{XIANG2007}, respectively.
 {\color{black}
The  composite moment-free Filon-type methods developed in \cite{DOMINGUEZ2013,MAXU2017}  are  efficient for computing  oscillatory integrals with weakly singular integrand and stationary points. Furthermore, the new methods in  \cite{MAXU2017}, unlike the existing Filon-type methods,  do not have to compute the inverse of
the oscillator, and  have a polynomial order or exponential order of convergence. However,
 the main disadvantage of these composite methods is that subintervals near the singular point in the specified mesh have very small lengths and thus may cause serious round-off error problems.}

%One of the developed methods in the calculation of singular and oscillatory integrals is
% the Filon method, which is based on the modified moments which are the integrals of the product of polynomials, oscillatory functions and singular functions.
%
%Due to numerically stable recurrence relations for the modified moments, the proposed Filon method in \cite{KANG2013,KANG2015} was employed to compute oscillatory integrals with algebraic or logarithmic singularities at
%the end or interior points of the interval of integration.
%Specially, Filon-Clenshaw-Curtis methods have also been studied extensively \cite{} for singular and oscillatory integrals.
%Composite quadrature rules were developed recently in \cite{MAXU2017} to achieve the convergence of polynomial and exponential orders based on partitioning the integration domain according to the wave number and the singularity of the integrand.
A special  Gauss-type quadrature, based on the numerical steepest method, has been proposed for the highly oscillatory integrals with algebraic singularities \cite{HE2014,XU2015,Xu2016} for  linear oscillators, not applied to general oscillators.
{\color{black}
There is still much work to compute  the oscillatory integrals with logarithmic singularities in efficiency and accuracy.}

{\color{black}
Different from the existing methods, the purpose of this paper is
to design an efficient quadrature rule based on the classic Levin method for the computation of integrals of the form $I_{\log,w}^{[0,a]}[f,g]$.
The method developed in this paper requires no graded meshes and the computation of modified moments
and can be easily extended to the case with a complicated oscillator $g$ which inherit from the merits of the classic Levin method.
}

The Levin method, proposed in \cite{LEVIN1982},  is very efficient for computation of integrals of the form $I_{w}^{[0,a]}[f,g]$ if $f$ is not singular and $g'(x)\not=0$ for $x\in [a,b]$, where to evaluate the integral is  transformed into a certain ODE problem.
In particular,  one of the solutions of the ODE system is non-oscillatory and can be solved by a collocation technique. Compared with the other methods for the oscillatory integrals, the Levin method can be applied to a more general oscillator without explicit computation of the moments. It was also found that the Levin method is equivalent to the Filon method when applied to oscillatory integrals with a linear oscillator \cite{XIANG2007b}.
In addition, the Levin method can be implemented stably through the Chebyshev collocation method with TSVD \cite{LI2008} or by the GMRES method \cite{OLVER2010b,OLVER2010}. However, it cannot be applied directly to oscillatory integrals of the type $I_{\log,w}^{[0,a]}[f,g]$ with logarithmic singularity.
For example, we compute by the classic Levin method a simple logarithmically singular integral $\int_{0}^1 \log x e^{iwx}dx$
whose exact value is $-\frac{Si(w)}{w}-i\frac{\gamma-Ci(w)+\log w}{w}$ where $Si$ and $Ci$ denote the sine and cosine integral functions, respectively. Due to the singularity of the integrand at $x=0$, we use the modified Chebyshev-Gauss-Radau points $t_j=\left(1+\cos\frac{2\pi j}{2n-1}\right)/2, j=0,1,\ldots,n-1$ as the collocation points in the classic Levin method. The relative errors, shown in Table \ref{Ta0}, reveal that the classic Levin method loses the spectral accuracy when $w$ is small and it fails at all when the frequency is large enough.
\begin{table}[htb]
\begin{center}
\small
\caption{ Relative errors for $\int_{0}^1 \log x e^{iwx}dx$ computed by the classical Levin method with $n$ Chebyshev-Gauss-Radau collocation points}
\label{Ta0}
\def\temptablewidth{1\textwidth}  {\rule{\temptablewidth}{1pt}}
%\begin{tabular}{ccccc}
\begin{tabular*}{\temptablewidth}{@{\extracolsep{\fill}} ccccc}
%\hline
$n$& $w=10$  & $w=10^2$  & $w=10^3$  & $w=10^4$ \\
\hline
4&$2.1704e-01$&$4.1370e-01$&$5.6065e-01$&$6.5452e-01$\\
8&$1.4121e-03$&$2.0756e-01$&$3.8046e-01$&$5.0749e-01$\\
16&$7.0286e-04$&$7.0034e-02$&$2.2451e-01$&$3.7064e-01$\\
32&$1.7388e-04$&$1.0253e-02$&$1.0327e-01$&$2.4454e-01$\\
64&$4.1838e-05$&$2.5933e-04$&$2.6163e-02$&$1.3555e-01$\\
%\hline
\end{tabular*}
 {\rule{\temptablewidth}{1pt}}
\end{center}
\end{table}

{\color{black}
Our main idea of the new Levin method is the separation of the singularity from the solution of the singular ODE to avoid the influence from the singular forcing function.
%, here, we consider  a new Levin method to $I_{\log,w}^{[0,a]}[f,g]$ by separation of the singularity
By using the technique of singularity separation, the solution of the singular ODE is transformed into solutions of two non-singular ODEs based on the principle of superposition. The linear systems obtained in solving these two ODEs by the collocation method share the same matrix and thus they can be solved efficiently with a little increased cost compared with the classic Levin method. The new Levin method for oscillatory integrals with logarithmic singularities keeps nearly all of the merits of the classic Levin method for the non-singular oscillatory integrals:
\begin{enumerate}
  \item[(1)] It does not require the computation of moments and is applicable for the nonlinear oscillator case.
  \item[(2)] It converges nearly superalgebraically with respect to the number of collocation points when $f$ is smooth, $g$ is linear and the ODEs are solved by the Chebyshev collocation method. Numerical experiments validate that it is also true for the general oscillator $g$ without stationary points.
  \item[(3)] It shares the important property: the higher of the frequency the more accuracy of the method. The asymptotic order is $\bO(w^{-2}\log(1+|w|))$ with respect to the frequency.
\end{enumerate}
}

This paper is organized as follows. In section 2, we develop the new Levin method for integrals of the type $I_{\log,w}^{[0,a]}[f,g]$ and present two algorithms for linear and nonlinear oscillators, respectively. The error analysis is presented in section 3 for each algorithm of the new Levin method. Numerical examples are shown in section 4 to validate the proposed Levin method. We summarize our conclusions in section 5.

\section{A new Levin method for logarithmically singular oscillatory integrals}
In this section, we propose a new Levin method for logarithmically singular oscillatory integrals with two algorithms for linear and nonlinear oscillators, respectively.
{\color{black}
We further assume that the oscillator satisfies $g(0)=0$ and $g'(x)>0$, $x\in [0,a]$, i.e. $g$ is an increasing function starting from the origin. If $g(0)\neq 0$, then $g(x)$ is replaced by $g(x)-g(0)$ and if $g'(x)<0$, then $g(x)$ is replaced by $-g(x)$ and $w$ by $-w$. }
For the notation simplification, $I_{\log,w}^{[0,a]}[f,g]$ is shorten as $I_{\log}[f,g]$ in this section.

%Let $I_\omega^{[a,b]}[f,g]=\int_a^b f(x)e^{iwg(x)}dx$ where $w$ is a parameter with large absolute value, $f$ and $g$ are suitably smooth functions.

The spirit of the Levin method for
$$
I_\omega^{[a,b]}[f,g]=\int_a^b f(x)e^{iwg(x)}dx
$$
is based upon the fact that if $f$ were of the form
\[
f(x)=p'(x)+iwg'(x)p(x)\equiv \bL p(x),\; a\leq x\leq b,
\]
then the integral $I_\omega^{[a,b]}[f,g]$ could be evaluated as
\[
I_\omega^{[a,b]}[f,g]=\int_a^b (p'(x)+iwg'(x)p(x))e^{iwg(x)}dx=p(b)e^{iwg(b)}-p(a)e^{iwg(a)}.
\]
Thus it transforms the evaluation of the integral into an ODE problem, $\bL p(x)=f(x)$.
It has been proven in \cite{LEVIN1982,LEVIN1997} that the ODE possesses at least a non-oscillatory solution
which inspires
the Levin method, or the Levin collocation method.
That is to find a polynomial $p_n(x)$ with degree $\leq n-1$ such that
\begin{equation}\label{levinc}
\bL p_n(c_j)=f(c_j),\; j=0,1,\ldots,n-1,
\end{equation}
and to compute $I_\omega^{[a,b]}[f,g]$ numerically by
\begin{equation}\label{levin}
\int_a^b \bL p_n(x)e^{iwg(x)}dx =p_n(b)e^{iwg(b)}-p_n(a)e^{iwg(a)}.
\end{equation}

The convergence rate of the Levin method has been studied extensively \cite{LEVIN1997,XIANG2007b,OLVER2010}. In addition, the Filon method and the Levin method were proved to be identical when the oscillator is a linear function in \cite{XIANG2007b}. Furthermore, two numerically stable algorithms,  the Levin-Chebyshev collocation method using TSVD and the GMRES-Levin collocation method are presented in  \cite{LI2008}  and \cite{OLVER2010}, respectively.

To compute $I_{\log}[f,g]$ according to the spirit of the Levin method, we hope to find a function $p$ such that $\left(p(x)e^{iwg(x)}\right)'=f(x)\log(x)e^{iwg(x)}$, or
\begin{equation}\label{lode}
\bL p(x)=f(x)\log(x).
\end{equation}
Then $I_{\log}[f,g]=p(a)e^{iwg(a)}-p(0)e^{iwg(0)}$. However,
the direct application of the Levin-Chebyshev collocation method
 to the computation of $I_{\log}[f,g]$ shall cause the great error.
It is because that the logarithmically singular function $\log(x)$ leads to a singular Levin ODE which can not be solved efficiently by
the collocation method based on polynomials.
The main purpose of this paper is to address this difficulty and to propose efficient new Levin methods for singular integrals of this kind.

The basic idea in solving the singular ODE of the Levin method is to separate the singularity and to transform the singular ODE to non-singular ODEs. Inspired by the equality $\left(x(\log x-1)\right)'=\log x$, the solution $p$ should also possess the logarithmic singularity.
With this property, we introduce a new form for the function $p$
\begin{equation}\label{form}
p(x)=q(x)\log(x)+h(x).
\end{equation}
By substituting \eqref{form} into \eqref{lode}, we obtain that
\begin{equation}\label{eq1}
    \left(q'(x)+iwg'(x)q(x)-f(x)\right)\log(x)+h'(x)+iwg'(x)h(x)+\frac{q(x)}{x}=0
\end{equation}
To obtain a particular solution $p$, we consider naturally a particular case of \eqref{eq1},
\begin{eqnarray}
% \nonumber to remove numbering (before each equation)
  q'(x) + iwg'(x) q(x) &=& f(x),\label{eq3}\\
  h'(x)+iwg'(x)h(x) &=& -\frac{q(x)}{x}. \label{eq4}
\end{eqnarray}
Once we get the particular solutions $q$ and $h$ of \eqref{eq3} and \eqref{eq4}, respectively, we have a particular solution for  \eqref{lode}.
%By introducing the representation of $p$, we hope to separate the singularity of the ODE and to transform the singular one to a non-singular one.

We next consider the system of ODEs \eqref{eq3} and \eqref{eq4}. {\color{black} For this purpose, we recall a result in \cite{LEVIN1997} about the existence of a `non-oscillatory' solution of the ODE \eqref{eq3} which is understood as a solution whose many derivatives are bounded uniformly for $w$.
\begin{lemma}\label{Existence}
Let $f\in C^{2n+1}[0,a]$, $g\in C^{2n+1}[0,a]$ satisfies $g'(x)\neq 0$ $x\in[0,a]$ and the $(2n+1)$ derivatives of $g$ are bounded uniformly in $w$, for $|w|>1$. Then there exists a constant $c$ independent of $w$ and a solution $q(x)$ of  \eqref{eq3} satisfying
\begin{equation}\label{condq}
  \left\|\bD^jq\right\|<\frac{c}{w}, j=0,\ldots,n,
\end{equation}
where $\bD$ is the differential operator and $\|\cdot\|$ denotes the maximum norm on $[0,a]$.
\end{lemma}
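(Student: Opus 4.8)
The plan is to build the non-oscillatory solution explicitly from the asymptotic expansion of \eqref{eq3} in inverse powers of $w$, and then to bound the discrepancy between this approximation and a genuine solution. First I would look for a formal series $q(x)\sim\sum_{k\ge 0}a_k(x)/(iw)^{k+1}$ and substitute it into $q'+iwg'q=f$. Matching equal powers of $w$ gives the recursion
\[
a_0=\frac{f}{g'},\qquad a_{k+1}=-\frac{a_k'}{g'},\quad k\ge 0.
\]
Because $g'\neq 0$ and the derivatives of $g$ (hence of $1/g'$) are bounded uniformly in $w$, each $a_k$ depends only on $f$, $g$ and finitely many of their derivatives, and is therefore bounded on $[0,a]$ by a constant independent of $w$.

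Next I would truncate after $N=n+1$ terms, $q_N=\sum_{k=0}^{n}a_k/(iw)^{k+1}$. Using the recursion the interior terms telescope, so that
\[
q_N'+iwg'q_N=f-r,\qquad r:=-\frac{a_n'}{(iw)^{n+1}}=\bO\!\left(w^{-(n+1)}\right),
\]
i.e.\ $q_N$ solves \eqref{eq3} up to the small forcing term $r$. I would then set $q=q_N+e$, where $e$ is the particular solution of $e'+iwg'e=r$ picked out by the integrating factor $e^{iwg}$ with $e(0)=0$, namely $e(x)=e^{-iwg(x)}\int_0^x e^{iwg(t)}r(t)\,dt$. Since $|e^{\pm iwg}|=1$, this gives immediately $\|e\|\le a\,\|r\|=\bO(w^{-(n+1)})$, and hence $\|q\|\le\|q_N\|+\|e\|=\bO(w^{-1})$, which is the $j=0$ case of \eqref{condq}.

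The main obstacle is the estimate for the higher derivatives $\bD^j e$, because differentiating $e$ reintroduces the large factor $w$. I would control these by differentiating $e'=r-iwg'e$ repeatedly and applying the Leibniz rule, obtaining
\[
e^{(j+1)}=r^{(j)}-iw\sum_{m=0}^{j}\binom{j}{m}(g')^{(m)}e^{(j-m)}.
\]
Starting from $\|e\|=\bO(w^{-(n+1)})$, an induction on $j$ shows that each differentiation costs at most one power of $w$, so $\|\bD^j e\|=\bO(w^{-(n+1-j)})$; for every $j\le n$ the exponent $n+1-j\ge 1$, whence $\|\bD^j e\|=\bO(w^{-1})$. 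Combined with the evident bound $\|\bD^j q_N\|=\bO(w^{-1})$ (the leading contribution being $\bD^j a_0/(iw)$), this yields $\|\bD^j q\|<c/w$ for $j=0,\dots,n$ with $c$ independent of $w$.

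Finally I would verify that the construction is legitimate under the stated regularity: forming $a_n$ requires $n$ successive differentiations of $a_0=f/g'$, and bounding $r^{(j)}$ up to $j=n$ demands differentiating $a_n$ once more, which is precisely what consumes the $C^{2n+1}$ smoothness of $f$ and $g$ and the uniform boundedness of their derivatives in $w$. I expect the delicate point to be exactly this derivative estimate for $e$: the bound on $e$ itself is trivial from the modulus-one integrating factor, but one must check that the margin provided by the truncation order $N=n+1$ absorbs the power of $w$ lost at each of the $\le n$ differentiations without destroying the $\bO(w^{-1})$ conclusion.
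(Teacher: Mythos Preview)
The paper does not give its own proof of this lemma: it is quoted verbatim as a known result from Levin's 1997 paper \cite{LEVIN1997}, so there is no in-paper argument to compare against. Your construction---truncated asymptotic series $q_N=\sum_{k=0}^{n}a_k/(iw)^{k+1}$ with $a_0=f/g'$, $a_{k+1}=-a_k'/g'$, plus an exact corrector $e$ obtained via the integrating factor---is precisely the standard mechanism behind Levin's result, and each step checks out: the telescoping residual $r=-a_n'/(iw)^{n+1}$ is correct, the crude bound $\|e\|\le a\|r\|$ follows from $|e^{\pm iwg}|=1$, and the induction on $\|\bD^j e\|$ via $e^{(j+1)}=r^{(j)}-iw\sum_m\binom{j}{m}(g')^{(m)}e^{(j-m)}$ indeed loses exactly one power of $w$ per differentiation, so the truncation margin $n+1$ absorbs all $n$ derivatives.

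One very minor bookkeeping remark: to reach $\|\bD^n e\|$ via your recursion you only need $r^{(n-1)}$, not $r^{(n)}$, so the regularity actually consumed is $a_n\in C^{n}$, i.e.\ $f\in C^{2n}$ and $g\in C^{2n+1}$; the hypothesis $f\in C^{2n+1}$ is thus slightly stronger than your argument requires, which is harmless. Otherwise the proposal is complete and correct.
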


It is clear from Lemma \ref{Existence} that there exists a particular non-oscillatory solution satisfying the ODE \eqref{eq3} which can be approximated well by polynomials based on the collocation method no matter how large is the absolute value of $w$.}
 However, if the solution obtained from \eqref{eq3} does not vanish at $x=0$, equation \eqref{eq4} is still a strongly singular ODE. To {\color{black} loose this singularity of \eqref{eq4},} we restrict $q(0)=0$.
{\color{black} It is proved in the following lemma that $\frac{q(x)}{x}$ possesses good regularity when $q$ is smooth enough with the restriction $q(0)=0$. Note that when $x=0$, the value of $\frac{q(x)}{x}$ is defined by taking the limitation as $x\rightarrow 0$.
 \begin{lemma}\label{lemma1}
   If $q\in C^n[0,a]$ and $q(0)=0$, $1\leq n\in\aN$, then  $\tilde{q}\in C^{n-1}[0,a]$ where
   \begin{equation*}
     \tilde{q}(x):=\begin{cases}\frac{q(x)}{x}, x\neq 0,\\ q'(0), x=0. \end{cases}
   \end{equation*}
 \end{lemma}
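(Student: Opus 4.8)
The plan is to replace the pointwise quotient $q(x)/x$ by an integral representation that is manifestly regular at the origin, and then to obtain the derivatives of $\tilde q$ by differentiating under the integral sign. The starting observation is Hadamard's lemma: since $q(0)=0$, the fundamental theorem of calculus applied to $t\mapsto q(tx)$ gives
$$q(x)=\int_0^1 \frac{d}{dt}q(tx)\,dt=x\int_0^1 q'(tx)\,dt,$$
so that for every $x\in[0,a]$, including $x=0$,
$$\tilde q(x)=\int_0^1 q'(tx)\,dt.$$
At $x=0$ the right-hand side equals $q'(0)$, which matches the stated definition of $\tilde q(0)$ and in particular shows that $\tilde q$ is continuous.

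First I would verify that this representation holds on the whole interval and that its value at the origin is consistent, which removes the apparent singularity entirely. Next I would differentiate $\int_0^1 q'(tx)\,dt$ with respect to $x$ repeatedly. Because $q\in C^n[0,a]$, the integrand $(t,x)\mapsto q'(tx)$ together with its $x$-partial derivatives $t^k q^{(k+1)}(tx)$ up to order $k=n-1$ are continuous on the compact rectangle $[0,1]\times[0,a]$. The standard theorem on differentiation under the integral sign then yields
$$\bD^k\tilde q(x)=\int_0^1 t^k\,q^{(k+1)}(tx)\,dt,\qquad k=0,1,\ldots,n-1,$$
and each such expression is continuous in $x$ by the continuity of $q^{(k+1)}$ (which holds since $q\in C^n$) together with uniform continuity on the compact domain. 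Collecting these, $\tilde q$ has continuous derivatives up to order $n-1$, i.e.\ $\tilde q\in C^{n-1}[0,a]$.

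The only delicate point is the behavior at $x=0$: away from the origin $\tilde q=q/x$ is smooth by the quotient rule, so the issue is solely the passage to the limit as $x\to0$. The integral representation is precisely what resolves this, since it is given by a single formula valid at and near the origin, and hence the regularity at $0$ is inherited directly from that of $q^{(k+1)}$ rather than having to be extracted from cancellations in the quotient. I expect the main obstacle to be the clean justification of differentiation under the integral sign, but this relies only on the joint continuity of $t^k q^{(k+1)}(tx)$ on the compact set $[0,1]\times[0,a]$ and is therefore routine.
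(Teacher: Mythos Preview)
Your argument is correct and takes a genuinely different route from the paper. The paper writes out $\tilde q^{(k)}(x)$ for $x\neq 0$ via the Leibniz product rule, then proves by induction on $k$ (using L'Hospital's rule at each step) that $\tilde q^{(k)}(0)=q^{(k+1)}(0)/(k+1)$ and that $\lim_{x\to 0}\tilde q^{(k)}(x)=\tilde q^{(k)}(0)$. Your approach via Hadamard's lemma replaces all of this by the single representation $\tilde q(x)=\int_0^1 q'(tx)\,dt$, valid uniformly on $[0,a]$, from which differentiation under the integral sign gives $\tilde q^{(k)}(x)=\int_0^1 t^k q^{(k+1)}(tx)\,dt$ for $k\le n-1$. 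This is cleaner: the origin is no longer a special point, no L'Hospital arguments or induction are needed, and the paper's formula $\tilde q^{(k)}(0)=q^{(k+1)}(0)/(k+1)$ drops out immediately by evaluating the integral at $x=0$. The paper's proof, on the other hand, is slightly more elementary in that it avoids invoking the theorem on differentiation of parameter integrals, relying only on the product rule and L'Hospital, at the cost of being more computational.
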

 \begin{proof}
   When $x\neq 0$, it is deduced directly from  the general Leibniz rule for the high derivative of a product of two factors that
   \begin{equation*}
   \tilde{q}^{(k)}(x)=\frac{1}{x^{k+1}}\sum_{j=0}^kC_k^j(-1)^jj!x^{k-j}q^{(k-j)}(x), 0\leq k<n\;\text{and}\;k\in\aN.
   \end{equation*}
   We next verify by induction on $k$ that when $x=0$, the $k$-th derivative of $\tilde{q}$ equals $\frac{q^{(k+1)}(0)}{k+1}$ for $0\leq k<n$. When $k=0$, it is obvious according to the definition of $\tilde{q}$. Assuming that the case $k-1$ is established for $1\leq k<n$, i.e. $\tilde{q}^{(k-1)}(0)=\frac{q^{(k)}(0)}{k}$, we then consider the $k$-derivative of $\tilde{q}$ at $x=0$. According to the definition of the $k$-derivative, it is known that
   \begin{equation*}
   \begin{split}
     \tilde{q}^{(k)}(0) & =\lim_{x\rightarrow 0}\frac{\tilde{q}^{(k-1)}(x)-\tilde{q}^{(k-1)}(0)}{x} \\
     & = \lim_{x\rightarrow 0}\frac{1}{x^{k+1}}\left( \sum_{j=0}^{k-1} C_{k-1}^j(-1)^jj!x^{k-1-j}q^{(k-1-j)}(x)-\frac{q^{(k)}(0)}{k}x^k\right) \\
     &=\lim_{x\rightarrow 0}\frac{q^{(k)}(x)-q^{(k)}(0)}{(k+1)x}=\frac{q^{(k+1)}(0)}{k+1},
   \end{split}
   \end{equation*}
   where the L'Hospital's rule was used. Again by the use of the L'Hospital's rule, it is easily validated that $\lim_{x\rightarrow 0}\tilde{q}^{(k)}(x)=\tilde{q}^{(k)}(0)$ for $k<n$. Thus $\tilde{q}\in C^k[0,a]$ for $k<n$ and the desired result follows by setting $k=n-1$.
 \end{proof}
 }

We thus obtain an initial problem
\begin{equation}\label{ode}
    q'(x)+iwg'(x)q(x)=f(x),\; x\in[a,b],\;\;\text{and}\; q(0)=0,
\end{equation}
whose exact solution is represented by the highly oscillatory integral, i.e.
\[
q(x)=\int_0^xf(t)e^{iw(g(t)-g(x))}dt.
\]
Instead of computing the integrals directly, {\color{black} we find a non-oscillatory and well-behaved particular solution $q_1$ satisfying the ODE \eqref{eq3} without the initial condition according to Lemma \ref{Existence} } and the expression for $q$, the solution of the initial problem \eqref{ode}, is then given by
\begin{equation}\label{odesol}
 q(x)=q_1(x)-q_1(0)e^{-iwg(x)}.
 \end{equation}

We turn to equation \eqref{eq4}. {\color{black} Note that when $f$ is sufficiently smooth, equation \eqref{eq4} is non-singular. However, the solution $q$ obtained from \eqref{ode} is still highly oscillatory which would influence the solution of \eqref{eq4}. To deal with the high oscillation separately,} the solution $q$ is rewritten in the form
\begin{equation}\label{structure}
    q(x)=q_2(x)x+q_1(0)(1-e^{-iwg(x)}),
\end{equation}
where $q_2$ is determined by
\begin{equation}\label{q2}
q_2(x)=\frac{q_1(x)-q_1(0)}{x}, x\neq 0,\;\; \text{and} \; q_2(0)=q_1'(0)=f(0)-iwg'(0)q_1(0).
\end{equation}
{\color{black} It is known from Lemma \ref{lemma1} and its proof that $q_2$ is non-oscillatory and has good regularity if $q_1$ is non-oscillatory and behaves well. Especially, when $q_1$ is a polynomial, so is $q_2$. }

With the substitution of $q$ in \eqref{eq4}, we get that
\begin{equation}\label{eq7}
    h'(x)+iwg'(x)h(x) = -q_2(x)-q_1(0)\frac{1-e^{-iwg(x)}}{x}.
\end{equation}
By the linear superposition, the solution $h(x)$ can be split into two parts $h(x)=h_1(x)+h_2(x)$ where $h_j, j=1,2$ satisfy, respectively, the equations
\begin{eqnarray}
% \nonumber to remove numbering (before each equation)
  h_1'(x)+iwg'(x)h_1(x) &=& -q_2(x), \label{eq5}\\
  h_2'(x)+iwg'(x)h_2(x) &=& -q_1(0)\frac{1-e^{-iwg(x)}}{x}. \label{eq6}
\end{eqnarray}
According to Lemma \ref{lemma1} and the property of $q_1$, the ODE \eqref{eq5} is a non-singular equation which can be solved efficiently by the classic Chebyshev collocation method with the TSVD.

We next focus on equation \eqref{eq6} to present an analytic particular solution for \eqref{eq6} when $g(x)=x$. Since possessing an oscillatory forcing function, the equation \eqref{eq6} is very difficult to be solved numerically for general oscillator $g$ due to the oscillatory properties from $iwg'$ and the oscillatory forcing function. To get around the obstacle, we turn to the special functions for help when $g$ is linear and consider the case of general oscillators later. To this end, we introduce the complementary incomplete Gamma function and its property. The complementary incomplete Gamma function, denoted as $\Gamma(\alpha,z)$, is defined by
\[
\Gamma(\alpha,z) = \int_z^\infty e^{-t} t^{\alpha-1} dt,
\]
and it has a known series expansion when $\alpha=0$,
\begin{equation}\label{expansion}
\Gamma(0,z)=-\gamma-\Log(z)-\sum_{j=1}^\infty \frac{(-z)^j}{j(j!)},
\end{equation}
where $\gamma$ is the Euler's constant, equaling approximately 0.57721566490153286060651 and {\color{black} $\Log(z):=\log(|z|)+i\arg(z)$ and $\arg(z)$ denotes the principle argument of $z$ for a complex $z$.}. We are now ready to derive a particular solution of \eqref{eq6} for the case with $g(x)=x$. It is well-known that the solution of \eqref{eq6} satisfying $h_2(0)=0$ has a closed form,
 \[
h_2(x)=q_1(0) e^{-iwx} \int_0^x \frac{1-e^{iwt}}{t}dt.
\]
Substituting the Taylor's series expansion of $e^{iwt}$ and using the expansion \eqref{expansion} of $\Gamma(0,z)$, it is obtained that
\[
\begin{split}
\int_0^x \frac{1-e^{iwt}}{t}dt=-\sum_{j=1}^\infty \frac{(iwx)^j}{j(j!)} =\gamma+\Gamma(0,-iwx)+\Log(-iwx).
\end{split}
\]

Thus a particular solution  for \eqref{eq6} when $g(x)=x$ is given by
\begin{equation}\label{h2}
h_2(x)=q_1(0)e^{-iwx}\left( \gamma+\Gamma(0,-iwx)+\Log(-iwx) \right),x\neq 0,\;\;\text{and} \; h_2(0)=0.
\end{equation}

 Combining the solutions of \eqref{eq3}, \eqref{eq5} and \eqref{eq6}, a particular solution $p$ is derived successfully which reads
\begin{equation}\label{p}
p(x)=\left(q_1(x)-q_1(0)e^{-iwx}\right)\log x+h_1(x)+h_2(x).
\end{equation}
The logarithmically singular and oscillatory integral with linear oscillator follows directly
\begin{equation}\label{inte}
\int_0^a f(x) \log(x) e^{iwx} dx=p(a)e^{iwa}-p(0).
\end{equation}
Specially, when $a=1$, $\int_0^a f(x) \log(x) e^{iwx} dx=(h_1(1)+h_2(1))e^{iw}-h_1(0)$.

We summarize the first algorithm of the new Levin method for logarithmically oscillatory integrals with $g(x)=x$ as follows.
\begin{algorithm}[Levin algorithm for a linear oscillator]\label{alg}
Given a function $f\in C^1[0,a]$, $g(x)=x$ and a positive integer $n$, where  $\Bx_T $ denotes the vector of the Chebyshev-Lobatto points, i.e. the $j$-th element of $\Bx_T $, $x_j:=-\cos\frac{j\pi}{n-1}, j=0,1,\ldots,n-1$:

    {\bf 1:} Obtain $\hat{\Bx}=\phi(\Bx_T)$ where $\phi(x)=\frac{a}{2}x+\frac{a}{2}$;

    {\bf 2:} Let $\Bf=f(\hat{\Bx})$ and $G=\diag(g'(\hat{\Bx}))$;

    {\bf 3:} Construct the matrix $L=\frac{2}{a}D+i\omega G$;

    {\bf 4:} Solve $\Bq_1=L^{-1}\Bf$ by TSVD;

    {\bf 5:} Construct the vector $\Bq_2=q_2(\hat{\Bx})$ where $q_2$ is defined in \eqref{q2};
    %X(\Be(\phi^{-1}(\hat{\Bx}))^\top \Bq_1 -\Be(-1)^\top \Bq_1)$ with the first element replaced by $\Be(-1)^\top (\Bf-iw\Bq_1)$ where the matrix $X=\diag(1/\hat{\Bx})$;

    {\bf 6:} Solve $\Bh_1=L^{-1}\Bq_2$ by TSVD;

    {\bf 7:} Derive the value $h_2(a)$ by the formula \eqref{h2};% with $q_1(0)$ replaced by $\Be(-1)^\top \Bq_1$;

    {\bf 8:} Define
{\color{black}    \begin{equation}\label{levinwang}
    Q_{\log,w,n}^{[0,a],L}[f]=\left(e^{iwa}\Be_n^\top-\Be_1^\top\right)(\Bq_1\log a+\Bh_1)+ e^{iwa} h_2(a),
    \end{equation}
where $\Be_j$ denotes a unit column vector of size $n\times1$ whose $j$-th element is 1 while the others 0.}
    %{\bf 1:} Solving \eqref{eq3} by collocation method to obtain a non-oscillatory function $q_1$,
%
%    {\bf 2:} Using formula \eqref{q2} to obtain function $q_2$,
%
%    {\bf 3:} Solving \eqref{eq5} by collocation method to obtain a non-oscillatory function $h_1(x)$,
%
%    {\bf 4:} Combing equation \eqref{h2} and \eqref{p} to obtain a numerical approximation for integral $\int_0^af(x)\log(x)e^{iwx^n}dx$.
\end{algorithm}

We next consider the second algorithm of the new Levin quadrature for the case with a general oscillator $g(x)$. Since it is hard to derive $h_2$ from \eqref{eq6} numerically or analytically for the general oscillator, the Levin method for the linear case can not be applied directly.
To overcome this difficulty, we split the integral $I_{\log}[f,g]$ into two parts,
     \begin{equation}\label{nonlinear}
    \begin{split}
        \int_0^a f(x)\log(x)e^{iwg(x)}dx=& \int_0^af(x)\log\frac{x}{g(x)}e^{iwg(x)}dx +\int_0^af(x)\log(g(x)) e^{iwg(x)}dx \\
        \triangleq &I_1(f)+I_2(f).
        \end{split}
    \end{equation}
By the Hospital's rule, there exists the limit
   \[
   \lim_{x\rightarrow 0}\frac{x}{g(x)}=\frac{1}{g'(0)}\neq 0.
   \]
{\color{black} Besides, the product function $f(x)\log\frac{x}{g(x)}$ has good regularity if $f$ and $g$ is suitably smooth according to Lemma \ref{lemma1} since $\log\frac{x}{g(x)}=-\log\frac{g(x)}{x}$.}  It reveals that the integral $I_1(f)$ is readily computed efficiently by the classic Levin method.

For the second integral $I_2(f)$, we follow the same idea of singularity separation and pursue a particular solution $p$ with the form $p(x)=q(x)\log(g(x))+h(x)$ such that
\begin{equation}
    p'(x)+iwg'(x)p(x)=f(x)\log(g(x)).
\end{equation}
Similarly, it is
obtained two ODEs for $q(x)$ and $h(x)$,
\begin{eqnarray}
% \nonumber to remove numbering (before each equation)
  q'(x) + iwg'(x) q(x) &=& f(x), \; q(0)=0,\label{eq16}\\
  h'(x)+iwg'(x)h(x) &=& -\frac{q(x)g'(x)}{g(x)}. \label{eq17}
\end{eqnarray}
The solution $q$ can be represented by a sum of a non-oscillatory particular solution and a multiple of the general solution. To facilitate the solution of equation \eqref{eq17}, the solution $q(x)$ is formed as follows
 $$q(x)=q_1(x)-q_1(0)e^{-iwg(x)}=q_2(x)g(x)+q_1(0)(1-e^{-iwg(x)})$$
 where $q_1$ is a function satisfying the ODE
\begin{equation}\label{eq24}
q_1'(x)+iw g'(x)q_1(x) = f(x)
\end{equation}
and
\begin{equation}\label{q22}
q_2(x)=\frac{q_1(x)-q_1(0)}{g(x)}, x\neq 0,\;\; \text{and} \; q_2(0)=q_1'(0)= \frac{f(0)-iwg'(0)q_1(0)}{g'(0)}.
\end{equation}
Lemmas \ref{Existence} and \ref{lemma1} indicate that $q_1$ and $q_2$ are non-oscillatory and possess good regularity when $f$ and $g$ is smooth enough.

By substituting the expression of $q$ in \eqref{eq17}, the ODE \eqref{eq17} is broke into two ODEs,
\begin{eqnarray}
% \nonumber to remove numbering (before each equation)
  h_1'(x)+iwg'(x)h_1(x) &=& -q_2(x) g'(x) \label{eq18}\\
  h_2'(x)+iwg'(x)h_2(x) &=& -q_1(0)g'(x)\frac{1-e^{-iwg(x)}}{g(x)} \label{eq19}
\end{eqnarray}
and then a solution of $h$ is the sum of $h_1$ and $h_2$.
 With the help of the complementary incomplete Gamma function, a particular solution of ODE \eqref{eq19} is given explicitly by
     \begin{equation}\label{eq23}
     h_2(x)=q_1(0)e^{-iwg(x)}\left( \gamma+\Gamma(0,-iwg(x))+\Log(-iwg(x)) \right),x\neq 0,
     \end{equation}
 and $h_2(0)=0$.
 Since ODEs \eqref{eq24} and \eqref{eq18} are non-singular and possess at least one non-oscillatory solution, they can be solved efficiently by the Chebyshev-collocation methods. Once $q_1$, $h_1$ and $h_2$ are obtained, the integral $I_2(f)$ is readily computed according to the spirit of Levin idea.

We conclude the algorithm of the new Levin method for integrals with a general oscillator $g$. Let $f_1(x)= f(x)\log\frac{x}{g(x)}$ for $x\neq 0$ and $f_1(0)=f(0)\log\frac{1}{g'(0)}$.
\begin{algorithm}[Levin algorithm for a general oscillator] \label{alg2}
Given a function $f\in C^1[0,a]$, $g(x)\in C^1[0,a]$, $g'(x)> 0$ and $g(0)=0$ and  a positive integer $n$:

    {\bf 1:} Obtain $\hat{\Bx}=\phi(\Bx_T)$ where $\phi(x)=\frac{a}{2}x+\frac{a}{2}$;

    {\bf 2:} Let $\Bf_1=f_1(\hat{\Bx})$, $\Bf=f(\hat{\Bx})$ and $G=\diag(g'(\hat{\Bx}))$;

    {\bf 3:} Construct the matrix $L=\frac{2}{a}D+i\omega G$;% and $L=\frac{2}{a}D+i\omega I_n$ where $I_n$ is the identity matrix of $n$ diemnsions;

    {\bf 4:} Solve $\Bq=L^{-1}\Bf_1$ and $\Bq_1=L^{-1}\Bf$ by TSVD;

    {\bf 5:} Construct the vector $\Bq_2=-Gq_2(\hat{\Bx})$ where $q_2$ is defined in \eqref{q22};
    %=X(\Be(\phi^{-1}(\hat{\Bx}))^\top \Bq_1 -\Be(-1)^\top \Bq_1)$ with the first element replaced by $\Be(-1)^\top (\Bf-iw\Bq_1)$ where the matrix $X=\diag(1/\hat{\Bx})$;

    {\bf 6:} Solve $\Bh_1=L^{-1}\Bq_2$ by TSVD;

    {\bf 7:} Derive the value $h_2(a)$ by the formula \eqref{eq23};% with $q_1(0)$ replaced by $\Be(-1)^\top \Bq_1$;

    {\bf 8:} Define
    {\color{black}    \begin{equation}\label{levinwang2}
    \begin{split}
    Q_{\log,w,n}^{[0,a],L}[f,g]=&\left(e^{iwg(a)}\Be_n^\top - \Be_1^\top\right) \left(\Bq +\Bq_1\log (g(a))+\Bh_1\right)+e^{iwg(a)}h_2(a),
    \end{split}
    \end{equation}
    where $\Be_j$ denotes a unit column vector of size $n\times1$ whose $j$-th element is 1 while the others 0.}
\end{algorithm}

{\color{black} In a word, the new Levin method adopts the separation of singularity and oscillation to get around the singular difficulty by transforming the singular ODE into three non-singular ODEs with two of whom can be solved efficiently by the Chebyshev-collocation methods and the other one is solved analytically. It makes the Levin idea applicable for oscillatory integrals with logarithmic singularities. }

\section{Error analysis}
In this section, we present error analysis for the two algorithms of the new Levin method proposed in section 2.

We first reveal the relationship between the new Levin method and the Filon method when computing oscillatory integrals with a linear oscillator $g(x)=x$.
To this end, denote the numerical solutions of $q_1$, $q_2$ in \eqref{structure} and $h_1$ in \eqref{eq5} obtained through Algorithm \ref{alg} based on arbitrary points $\hat{\emph{\Bx}}=\{0\leq\hat{x}_0<\hat{x}_1<\ldots<\hat{x}_{n-1}\leq a\}$ by $\hat{q}_1$, $\hat{q}_2$ and $\hat{h}_1$. It is obvious that
 $$\hat{q}(x)=\hat{q}_1(x)-\hat{q}_1(0) e^{-iwx}=\hat{q}_2(x)x+\hat{q}_1(0)(1-e^{-iwx}).$$
Let $\hat{h}_2$ denote the approximation of $h_2$ in \eqref{h2} with $q_1(0)$ replaced by $\hat{q}_1(0)$.
The algorithm of the new Levin method for $\int_0^a f(x)\log(x)e^{iwx}dx$ can be expressed as
\begin{equation}\label{nlevin}
\begin{split}
    Q_{\log,w,n}^{[0,a],L}[f]&=\hat{q}(x) e^{iwx}\log(x)|^a_0+\hat{h}_1(x)e^{iwx}|^a_0+\hat{h}_2(x)e^{iwx}|^a_0\\
    & =\hat{q}(x) e^{iwx}\log(x)|^a_0+\int_0^a\bL(\hat{h}_1+\hat{h}_2)e^{iwx}dx
    \end{split}
\end{equation}
Let $\hat{f}$ denote the interpolant of $f$ of degree $n-1$ interpolating on the same points $\hat{\emph{\Bx}}$. The Filon method for $\int_0^a f(x)\log(x)e^{iwx}dx$ is to calculate
\begin{equation}\label{filon}
    Q_{\log,w,n}^{[0,a],F}[f]:=\int_0^a\hat{f}(x)\log(x)e^{iwx}dx.
\end{equation}

In the following, we present the relation between $Q_{\log,w,n}^{[0,a],L}[f]$ and $Q_{\log,w,n}^{[0,a],F}[f]$.
{\color{black}
\begin{theorem}\label{feql}
The new Levin method and the Filon method are identical in the computation of $\int_0^a f(x)\log(x) e^{iwx}dx$ when they are based on the same interpolation points $\hat{\Bx}$.
\end{theorem}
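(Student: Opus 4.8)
The plan is to show that the \emph{numerical} Levin solution
\[
\hat p(x) = \hat q(x)\log x + \hat h_1(x) + \hat h_2(x)
\]
satisfies, as an exact identity on $(0,a]$, the differential equation $\bL\hat p(x) = \hat f(x)\log x$, where $\hat f$ is the degree-$(n-1)$ polynomial interpolating $f$ at the points $\hat{\Bx}$. Once this is in hand the conclusion is immediate: since $\bL\hat p(x)\,e^{iwx} = \left(\hat p(x)e^{iwx}\right)'$, integrating gives
\[
\int_0^a \hat f(x)\log x\, e^{iwx}dx = \int_0^a \bL\hat p(x)\, e^{iwx}dx = \hat p(x)e^{iwx}\big|_0^a,
\]
where the right-hand side is precisely the boundary expression defining $Q_{\log,w,n}^{[0,a],L}[f]$ in \eqref{nlevin}, and the left-hand side is $Q_{\log,w,n}^{[0,a],F}[f]$ from \eqref{filon}.

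The heart of the argument is a sequence of polynomial identities. First I would observe that $\bL\hat q_1$ is a polynomial of degree at most $n-1$ (because $\hat q_1$ has degree at most $n-1$, so $\hat q_1'$ has degree at most $n-2$) which agrees with $f$ at the $n$ collocation points; by uniqueness of polynomial interpolation it therefore equals $\hat f$ \emph{identically}, not merely at the nodes. Since $\bL(e^{-iwx}) = -iw\,e^{-iwx} + iw\,e^{-iwx} = 0$ and $\hat q = \hat q_1 - \hat q_1(0)e^{-iwx}$ by \eqref{odesol}, it follows that $\bL\hat q = \bL\hat q_1 = \hat f$. The same degree-counting and interpolation argument applied to \eqref{eq5} gives $\bL\hat h_1 = -\hat q_2$ identically (here $-\hat q_2$ has degree at most $n-2$ and $\bL\hat h_1$ degree at most $n-1$, so agreement at $n$ nodes forces equality). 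Finally, $\hat h_2$ is the \emph{analytic} solution \eqref{h2}, hence $\bL\hat h_2 = -\hat q_1(0)\frac{1-e^{-iwx}}{x}$ exactly.

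Assembling $\bL\hat p$ by the product rule, exactly as in the derivation of \eqref{eq1}, yields
\[
\bL\hat p = (\bL\hat q)\log x + \bL(\hat h_1 + \hat h_2) + \frac{\hat q}{x} = \hat f\log x + \bL(\hat h_1 + \hat h_2) + \frac{\hat q}{x}.
\]
Substituting the structural form \eqref{structure}, namely $\hat q(x) = \hat q_2(x)x + \hat q_1(0)\left(1-e^{-iwx}\right)$, gives $\hat q/x = \hat q_2 + \hat q_1(0)\frac{1-e^{-iwx}}{x}$, which cancels exactly against $\bL\hat h_1 + \bL\hat h_2 = -\hat q_2 - \hat q_1(0)\frac{1-e^{-iwx}}{x}$. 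Every non-logarithmic term therefore vanishes and $\bL\hat p = \hat f\log x$, as required.

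The remaining care concerns the endpoint $x=0$, and this is where I expect the only genuine subtlety to lie. I would check that $\hat q(0) = \hat q_1(0) - \hat q_1(0) = 0$, so that $\hat q(x)\log x \to 0$ as $x\to 0^+$, and that $\hat h_2(0) = 0$ by construction, whence $\lim_{x\to 0^+}\hat p(x) = \hat h_1(0)$ is finite. This guarantees both that the improper integral $\int_0^a \hat f\log x\, e^{iwx}dx$ converges and that the boundary evaluation $\hat p(x)e^{iwx}\big|_0^a = \hat p(a)e^{iwa} - \hat h_1(0)$ is legitimate despite the logarithmic singularity at the origin, matching \eqref{nlevin} term by term. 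The main obstacle is thus not a single hard estimate but the careful bookkeeping required to justify the polynomial identities $\bL\hat q_1 = \hat f$ and $\bL\hat h_1 = -\hat q_2$ \emph{globally} (via degree counting and uniqueness of interpolation rather than nodewise agreement) and to confirm the exact cancellation of the mixed polynomial and transcendental ($e^{-iwx}$ and incomplete-Gamma) contributions as $x\to 0$.
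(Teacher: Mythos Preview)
Your proposal is correct and follows essentially the same approach as the paper: both proofs rest on the polynomial identities $\bL\hat q_1=\hat f$ and $\bL\hat h_1=-\hat q_2$ (obtained by degree counting and uniqueness of interpolation, which the paper phrases as ``the Fundamental Theorem of Algebra''), the exact identity $\bL\hat h_2=-\hat q_1(0)\frac{1-e^{-iwx}}{x}$, and the observation $\bL(e^{-iwx})=0$. The only organizational difference is that you compute $\bL\hat p=\hat f\log x$ directly via the product rule and then integrate, whereas the paper starts from the boundary expression \eqref{nlevin}, substitutes $\bL(\hat h_1+\hat h_2)=-\hat q/x$, and recognizes the result as $\int_0^a\bL\hat q\log x\,e^{iwx}dx$ via integration by parts; your handling of the endpoint $x=0$ is, if anything, more careful than the paper's.
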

}
\begin{proof} When $\hat{q}_1$ is obtained by polynomial interpolation, it is a polynomial of degree $n-1$ and thus
$\hat{q}_2$ is a polynomial of degree less than $n-1$. By the Fundamental Theorem of Algebra, there exists
\begin{equation}\label{eq10}
\bL \hat{h}_1=-\hat{q}_2.
\end{equation}
With the definition \eqref{h2} of $\hat{h}_2$, we have that
\begin{equation}\label{eq11}
    \bL \hat{h}_2(x)= -\hat{q}_1(0)\frac{1-e^{-iwx}}{x}.
\end{equation}
 Using the relation between $\hat{q}$ and $\hat{q}_2$, it is derived directly from \eqref{eq10} and \eqref{eq11} that
\begin{equation}\label{eq12}
    \bL (\hat{h}_1+\hat{h}_2)(x)=-\frac{\hat{q}(x)}{x}.
\end{equation}
Combining \eqref{nlevin} and \eqref{eq12}, it is obtained that
\begin{equation}\label{eq13}
    Q_{\log,w,n}^{[0,a],L}[f]=\hat{q}(x) e^{iwg(x)}\log(x)|^a_0-\int_0^a \frac{\hat{q}(x)}{x}e^{iwx}dx=\int_0^a \bL \hat{q}(x)\log (x)e^{iwx}dx.
\end{equation}
where the second equality is assured by integration by parts.

Using the Fundamental Theorem of Algebra again, it has been proven in \cite{XIANG2007b} that
\begin{equation}\label{eq8}
\bL \hat{q}_1=\hat{f}.
\end{equation}
Since $\bL (\hat{q}_1(0)e^{-iw\cdot})=0$, it is obtained from \eqref{eq8} and the equality $\hat{q}(x)=\hat{q}_1(x)-\hat{q}_1(0) e^{-iwx}$  that
\begin{equation}\label{eq9}
\bL \hat{q}=\hat{f}.
\end{equation}

Equations \eqref{eq9} and \eqref{eq13} finally confirm the equivalence between the new Levin method and the Filon method, i.e. $Q_{\log,w,n}^{[0,a],L}[f]=Q_{\log,w,n}^{[0,a],F}[f]$.
\end{proof}

It has been proved in \cite{XIANG2007b} that the Levin method is equivalent to the Filon method when calculating non-singular oscillatory integrals with the linear oscillator.
Theorem \ref{feql} tells that this property is also kept for the new Levin method when dealing with the singular oscillatory integrals with the linear oscillator.
Note that Theorem \ref{feql} is only true for the case when $g(x)$ is linear which is needed in \eqref{eq10} and \eqref{eq8} in the proof. With this equivalence, the error analysis of the new Levin algorithms is readily obtained.

We next present the error analysis for Algorithm \ref{alg} with arbitrary points $\hat{\emph{\Bx}}$ when $g(x)=x$ based on Theorem \ref{feql}.
Let $E_n(f):=\left|I_{\log,w}^{[0,a]}[f,\ell]-Q_{\log,w,n}^{[0,a],L}[f]\right|$ denote the absolute error where $\ell(x):=x$. For the purpose of bounding $E_n(f)$, we recall two basic lemmas in the numerical analysis for the computation of oscillatory integrals.
{\color{black}
\begin{lemma}(van der Corput-type lemma \cite[p.332,334]{STEIN1993} \cite{XAINGGUO2014})\label{xlemma}
Suppose that $f\in C^1[0,a]$ and $g\in C^2[0,a]$ satisfying $|g'(x)|\geq 1, x\in[0,a]$ and $g'(x)$ is monotonic, then for all $w>0$, there exists a constant $C$ independent of $w$
such that
\begin{eqnarray*}
% \nonumber to remove numbering (before each equation)
  \left|\int_0^a f(x) e^{iwg(x)}dx \right| &\leq & Cw^{-1} \left( |f(a)|+\int_0^a|f'(x)|dx \right), \\
  \left|\int_0^a  \ln (x) f(x) e^{iwx}dx \right| &\leq & C(1+|\ln(w)|) w^{-1} \left( |f(a)|+\int_0^a|f'(x)|dx \right).
\end{eqnarray*}
\end{lemma}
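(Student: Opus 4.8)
The first inequality is the classical van der Corput estimate, and the plan is to obtain it by a single integration by parts after writing $e^{iwg(x)}=\frac{1}{iwg'(x)}\frac{d}{dx}e^{iwg(x)}$, which is legitimate because the hypothesis $|g'|\geq 1$ keeps $1/g'$ bounded. Integrating by parts transfers the derivative onto $f/g'$ and produces boundary terms together with an interior integral. First I would bound the boundary terms by $w^{-1}(|f(a)|+|f(0)|)$; since $|f(0)|\leq |f(a)|+\int_0^a|f'|$, these already fit the target form. Expanding $\frac{d}{dx}\!\left(\frac{f}{g'}\right)=\frac{f'}{g'}-\frac{f g''}{(g')^2}$ then splits the interior integral into two pieces.

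The piece carrying $f'/g'$ is controlled by $w^{-1}\int_0^a|f'|$ using $|g'|\geq 1$. The piece carrying $fg''/(g')^2$ is exactly where the monotonicity hypothesis enters: because $g'$ is monotone, $g''$ does not change sign, so
\[
\int_0^a \frac{|g''(x)|}{(g'(x))^2}\,dx=\left|\int_0^a \frac{g''(x)}{(g'(x))^2}\,dx\right|=\left|\frac{1}{g'(0)}-\frac{1}{g'(a)}\right|\leq 2 .
\]
Pulling out $\sup_{x}|f(x)|\leq |f(a)|+\int_0^a|f'|$ then bounds this piece by $2w^{-1}\bigl(|f(a)|+\int_0^a|f'|\bigr)$, and summing all contributions gives the first inequality with an absolute constant $C$ (valid for every $w>0$, since the integration by parts itself needs no lower bound on $w$).

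For the second inequality I would specialize to $g(x)=x$, so that $g''\equiv 0$, and confront the logarithmic singularity by splitting the integral at the balancing point $x_0=\min(1/w,a)$. On $[0,x_0]$ I estimate trivially, $\bigl|\int_0^{x_0}\ln(x)f(x)e^{iwx}dx\bigr|\leq \|f\|\int_0^{1/w}|\ln x|\,dx$, and the elementary identity $\int_0^{1/w}|\ln x|\,dx=w^{-1}(1+\ln w)$ already supplies the factor $(1+|\ln w|)w^{-1}$. On $[x_0,a]$ the integrand is smooth, so I apply the single-integration-by-parts bound (the $g''\equiv0$ case of the first inequality) to $F(x)=f(x)\ln x$, for which $F'(x)=f(x)/x+f'(x)\ln x$. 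The term $\int_{1/w}^a|f(x)/x|\,dx\leq \|f\|\int_{1/w}^a dx/x=\|f\|\ln(wa)$ is precisely the source of the extra logarithmic factor, while $\int_{1/w}^a|\ln x|\,|f'|\,dx\leq(\ln w+|\ln a|)\int_0^a|f'|$ contributes the remaining part, and the boundary term $|F(a)|=|\ln a|\,|f(a)|$ is harmless. Collecting these and using $\|f\|\leq|f(a)|+\int_0^a|f'|$ yields the second inequality.

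The only genuine obstacle is the treatment of the singularity in the second estimate: the split point must be chosen so that the direct estimate on the inner interval and the $\int dx/x$ arising from differentiating $\ln x$ on the outer interval both scale like $\log w$, and $x_0=1/w$ is exactly the balancing choice. Using $x_0=\min(1/w,a)$ rather than $1/w$ also disposes of the small-$w$ regime $w<1/a$ cleanly, where the entire integral is the inner piece and the claimed bound holds because its right-hand side already grows like $w^{-1}$. By comparison the first inequality is routine once one records the sign-definiteness of $g''$ coming from the monotonicity of $g'$.
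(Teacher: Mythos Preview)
Your argument is correct and is the standard derivation of both estimates. However, the paper does not actually prove this lemma: it is stated as a cited result, with the first inequality attributed to Stein's \emph{Harmonic Analysis} and the logarithmic variant to \cite{XAINGGUO2014}, and the text moves directly to applying it. So there is no ``paper's own proof'' to compare against.

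That said, your reconstruction matches the arguments one finds in those references. The first estimate is precisely van der Corput's first-derivative test, and your use of the monotonicity of $g'$ to make $\int_0^a |g''|/(g')^2\,dx$ telescopically bounded is exactly the classical trick. For the second estimate, your splitting at $x_0=\min(1/w,a)$ and the observation that differentiating $\ln x$ on the outer interval produces $\int_{1/w}^a dx/x=\ln(wa)$ is the standard route to the extra $\log w$ factor. One small point worth making explicit: when you invoke ``the first inequality'' on the outer interval $[1/w,a]$ you are using its obvious adaptation to a subinterval (the integration-by-parts argument is local), and the boundary contribution $|F(1/w)|=|f(1/w)|\ln w$ is absorbed via $|F(1/w)|\le |F(a)|+\int_{1/w}^a|F'|$, just as $|f(0)|$ was in the first part. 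With that understood, the proof is complete.
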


We are ready to analyze the absolute error $E_n(f)$. Let $C$ denote a generic
constant independent of $n$ and $w$ whose value may be changed in each appearance.
\begin{theorem}\label{thm1}
If $f\in C^{n}[0,a]$ and $n\geq 2$, then the numerical integral computed by Algorithm \ref{alg} based on arbitrary points $\hat{\Bx}=\{0\leq\hat{x}_0<\hat{x}_1<\ldots<\hat{x}_{n-1}\leq a\}$ satisfies
\begin{equation}\label{error}
    E_n(f)\leq C(1+|\ln(w)|) w^{-1} \frac{\|f^{(n)}\|_\infty a^n}{(n-1)!}.
\end{equation}
Specially, when $\hat{x}_0=0$, $\hat{x}_{n-1}=a$ and $n\geq 3$, i.e. both endpoints are included, there exists
\begin{equation}\label{error2}
    E_n(f)\leq C(1+|\ln(w)|) w^{-2} \frac{\|f^{(n)}\|_\infty a^{n-1}}{(n-2)!}.
\end{equation}
\end{theorem}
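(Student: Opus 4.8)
The plan is to use Theorem \ref{feql} to turn the Levin error into an interpolation error against the log-weighted oscillatory integral, and then to feed the resulting interpolation remainder into Lemma \ref{xlemma}. Since $g(x)=x$ is linear, Theorem \ref{feql} gives $Q_{\log,w,n}^{[0,a],L}[f]=Q_{\log,w,n}^{[0,a],F}[f]=\int_0^a\hat f(x)\log(x)e^{iwx}dx$, where $\hat f$ is the degree-$(n-1)$ interpolant of $f$ at the nodes $\hat{\Bx}$. Writing $e:=f-\hat f$, I therefore have
$$E_n(f)=\left|\int_0^a e(x)\log(x)e^{iwx}dx\right|.$$
Two facts will be used repeatedly: $e$ vanishes at the $n$ nodes, and $e^{(n)}=f^{(n)}$ (because $\deg\hat f=n-1$), so $\|e^{(n)}\|_\infty=\|f^{(n)}\|_\infty$.

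For the first estimate \eqref{error} I would apply the second (logarithmic) inequality of Lemma \ref{xlemma} to $e$, reducing the task to bounding $|e(a)|+\int_0^a|e'(x)|dx$. The key step is a Rolle-type argument: since $e$ has $n$ zeros, $e'$ has at least $n-1$ zeros $\eta_1<\dots<\eta_{n-1}$; the degree-$(n-2)$ interpolant of $e'$ at these points is identically zero, so the standard remainder formula (valid since $e'\in C^{n-1}$) gives $e'(x)=\frac{(e')^{(n-1)}(\zeta)}{(n-1)!}\prod_{i=1}^{n-1}(x-\eta_i)$ with $(e')^{(n-1)}=f^{(n)}$. Hence $\|e'\|_\infty\le\|f^{(n)}\|_\infty a^{n-1}/(n-1)!$ and $\int_0^a|e'|\le a\|e'\|_\infty$, while the usual remainder gives $|e(a)|\le\|f^{(n)}\|_\infty a^n/n!$; both are dominated by $\|f^{(n)}\|_\infty a^n/(n-1)!$, which yields \eqref{error} (and only $n\ge 2$ is needed for $e'$ to have a zero).

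For the sharper estimate \eqref{error2}, when both endpoints are nodes so that $e(0)=e(a)=0$, I would gain one extra power of $w^{-1}$ by integration by parts against $v=e^{iwx}/(iw)$. The boundary terms vanish (at $a$ since $e(a)=0$, at $0$ since $e(x)\log x=O(x\log x)\to 0$), leaving
$$E_n(f)=\frac{1}{w}\left|\int_0^a\Big(e'(x)\log x+\tfrac{e(x)}{x}\Big)e^{iwx}dx\right|.$$
To the first term I apply the logarithmic inequality of Lemma \ref{xlemma} to $e'$, bounding $\int_0^a|e''|$ and $|e'(a)|$ by the same Rolle-type argument applied to $e''$ (which has $n-2$ zeros and satisfies $(e'')^{(n-2)}=f^{(n)}$), so $\|e''\|_\infty\le\|f^{(n)}\|_\infty a^{n-2}/(n-2)!$. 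To the second term I apply the first inequality of Lemma \ref{xlemma} to $\tilde e(x):=e(x)/x$, which is legitimate because $\tilde e\in C^{n-1}$ by Lemma \ref{lemma1}; crucially $\tilde e(a)=e(a)/a=0$ kills the endpoint contribution. Each inequality supplies a factor $w^{-1}$, which combines with the prefactor $1/w$ to give $w^{-2}$, and the polynomial factors collapse to $\|f^{(n)}\|_\infty a^{n-1}/(n-2)!$; the requirement $n\ge 3$ is exactly what makes $e''$ and $\tilde e'$ have a zero.

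The main obstacle I anticipate is the clean control of $\int_0^a|\tilde e'(x)|dx$ using only $C^n$ smoothness, since differentiating the divided-difference form of $\tilde e$ naively costs an extra derivative. I expect to resolve this through the integral representation $\tilde e(x)=\int_0^1 e'(xt)\,dt$ (which follows from $e(0)=0$): differentiating under the integral gives $\tilde e^{(n-1)}(x)=\int_0^1 t^{n-1}f^{(n)}(xt)\,dt$, whence $\|\tilde e^{(n-1)}\|_\infty\le\|f^{(n)}\|_\infty/n$. Running the Rolle-type argument on $\tilde e'$ (which has $n-2$ zeros among the interior nodes) then yields $\|\tilde e'\|_\infty\le\|f^{(n)}\|_\infty a^{n-2}/\big(n\,(n-2)!\big)$ and hence $\int_0^a|\tilde e'|\le\|f^{(n)}\|_\infty a^{n-1}/(n-2)!$, closing the last estimate.
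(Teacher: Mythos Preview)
Your proposal is correct and follows the paper's route almost exactly: reduce to the Filon remainder via Theorem~\ref{feql}, apply Lemma~\ref{xlemma} together with Rolle-type bounds on $e,e'$ for \eqref{error}, then integrate by parts once and apply Lemma~\ref{xlemma} to both $e'(x)\log x$ and $e(x)/x$ (with Rolle again on $e',e''$) for \eqref{error2}. The one place you diverge is in controlling $\|(e/x)'\|_\infty$: the paper does this by a second-order Taylor expansion of $e$ and $e'$ at $0$ to get the pointwise bound $|(e/x)'|\le\tfrac32\|e''\|_\infty$, rather than your integral representation $\tilde e(x)=\int_0^1 e'(xt)\,dt$ combined with Rolle on $\tilde e'$; both arguments are valid and feed into the same final estimate.
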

\begin{proof}
Theorem \ref{feql} reveals that
\begin{equation}\label{eq14}
    E_n(f)=\left|\int_0^a (f(x)-\hat{f}(x))\log(x)e^{iwx}dx\right|
\end{equation}
where $\hat{f}$ is the interpolation of $f$ on the nodes $0\leq \hat{x}_0<\hat{x}_1<\ldots<\hat{x}_{n-1}\leq a$. In order to estimate the error,
let $\psi(x):=f(x)-\hat{f}(x)$. It is obvious that $\psi(\hat{x}_j)=0, j=0,1,\ldots,n-1$. According to Rolle's theorem, there exist $y_j\in (\hat{x}_j, \hat{x}_{j+1})$ such that
\[
\psi'(y_j)=0,\; j=0,1,\ldots,n-2.
\]
Using the expression for interpolation errors, it is clear that
\[
\psi(x)=\frac{\psi^{(n)}(\xi_1)}{n!}\prod_{j=0}^{n-1} (x-\hat{x}_j), \;\; \psi'(x)=\frac{\psi^{(n)}(\xi_2)}{(n-1)!}\prod_{j=0}^{n-2} (x-y_j)
\]
where $\xi_1,\xi_2\in[0,a]$ depending on the value of $x$.
According to Lemma \ref{xlemma}, there exists a constant $C$ independent of $n$ and $w$ such that
%Combining functions $\psi, \psi'$, Lemma \ref{stein} for $k=1$ and $c(1)=3$, and the fact $\hat{f}^{(n)}\equiv 0$, we get
\begin{equation}\label{eq15}
\begin{split}
E_n(f)&=\left|\int_0^a \psi(x)\log(x)e^{iwg(x)}dx\right| \\
& \leq C(1+|\ln(w)|) w^{-1} \left( |\psi(a)|+\int_0^a|\psi'(x)|dx \right) \\
& \leq C(1+|\ln(w)|) w^{-1} (\|\psi\|_\infty+a\|\psi'\|_\infty).
\end{split}
\end{equation}
Since $\hat{f}^{(n)}\equiv 0$, the desired inequality \eqref{error} follows directly.

When $x_0=0$ and $x_{n-1}=a$, there have $\psi(0)=0$ and $\psi(a)=0$. We can derive by integration by parts that
\begin{equation*}
    \int_0^a\psi(x)\log(x) e^{iwx}dx=-\frac{1}{iw} \int_0^a e^{iwx} \left( \psi'(x)\log (x)+ \frac{\psi(x)}x \right) dx.
\end{equation*}
Again using  Lemma \ref{xlemma}, we obtain
\begin{equation*}
   \begin{split}
E_n(f)&\leq C(1+|\ln(w)|) w^{-2} \left(|\psi'(a)|+\int_0^a\left|\psi''(x)\right| dx +\left| \frac{\psi(a)}{a}\right|+ \int_0^a \left|\left(\frac{\psi(x)}{x}\right)'\right| dx\right)\\
&\leq C(1+|\ln(w)|) w^{-2} \left(\|\psi'\|_\infty+ a\|\psi''\|_\infty+\|\tilde{\psi}\|_\infty+a \|\tilde{\psi}'\|_\infty \right),
\end{split}
\end{equation*}
where $\tilde{\psi}(x):=\psi(x)/x$. By Taylor's expansions, there exists $\xi_3\in (0,x)$ and $\xi_4\in(0,x)$ for a given $x\in [0,a]$ such that
\begin{equation*}
    \psi(x)=\psi'(0)x+\frac{\psi''(\xi_3)}{2}x^2,
\;\text{and}\;
    \psi'(x)=\psi'(0)+\psi''(\xi_4)x.
\end{equation*}
It is derived by a direct computation that
\begin{equation*}
    |\tilde{\psi}'(x)|=\left| \frac{\psi'(x)x-\psi(x)}{x^2}\right|= \left|\psi''(\xi_3)/2- \psi''(\xi_4)\right| \leq \frac{3}{2}\|\psi''\|_\infty.
\end{equation*}
Combining the discussion above, we get that
\begin{equation}\label{errbound}
   E_n(f)\leq C(1+|\ln(w)|) w^{-2} \left(2\|\psi'\|_\infty+ \frac52 a\|\psi''\|_\infty\right).
\end{equation}
By Rolle's theorem, there exist $z_j\in (y_j, y_{j+1})$ such that
\[
\psi''(z_j)=0,\; j=0,1,\ldots,n-3,
\]
and then we derive from the interpolation errors that
\[
 \psi''(x)=\frac{\psi^{(n)}(\xi_5)}{(n-2)!}\prod_{j=0}^{n-3} (x-z_j),
\]
where $\xi_5\in[0,a]$.
The bound for $E_n(f)$ follows naturally.
\end{proof}

It is obvious that when both end points are included in collocation points in the new Levin method, the asymptotic order is about $\bO(w^{-2}(1+\log|w|))$ according to Theorem \ref{thm1}.
}

We next give the error analysis for Algorithm \ref{alg} in which the collocation points are selected to be the modified Chebyshev-Gauss-Lobatto points. To this  end,
we recall the errors of $f-\hat{f}$ and its derivative where $\hat{f}$ is the interpolant on $[-1,1]$ based on the Chebyshev points. Let $\|\cdot\|_{T}$ be the Chebyshev-weighted 1-norm defined by
\[
\|u\|_T=\int_{-1}^1\frac{|u'(t)|}{\sqrt{1-t^2}}dt
\]
and
denote $\|u\|_{[-1,1],\infty}:=\max_{x\in[-1,1]}|u(x)|$.
{\color{black}
\begin{lemma}[Xiang et. al \cite{XIANG2010chebyshev}]\label{cheb}
    (\rnum{1}) If $f,f',\ldots,f^{(k-1)}$ are absolutely continuous on $[-1,1]$ and if $\|f^{(k)}\|_T=V_k<\infty$ for some $k\geq 1$ and $\hat{f}$ is the interpolant of $f$ of degree $n-1$ based on Chebyshev-Gauss-Lobatto points, then for each $n\geq k+2$,
    \begin{eqnarray}
    % \nonumber to remove numbering (before each equation)
      \|f-\hat{f}\|_{[-1,1],\infty} & \leq & \frac{4V_k}{k\pi (n-1)(n-2)\ldots (n-k)}, \label{chebinter1}\\
      \|f'-\hat{f}'\|_{[-1,1],\infty} &\leq& \frac{4nV_k}{(k-2)\pi (n-1)(n-3)(n-4)\ldots (n-k)}, k>2, \label{chebinterd1} \\
      \|f''-\hat{f}''\|_{[-1,1],\infty} &\leq& \frac{4n(n-1)V_k}{(3k-4)\pi(n-3)(n-4)(n-5)\ldots (n-k)}, k>4, \label{chebinterd2}
    \end{eqnarray}
    (\rnum{2}) If $f$ is analytic with $|f(z)|\leq M$ in the region bounded by the ellipse with foci $\pm 1$ and major and minor semiaxis lengths summing to $\rho>1$, then there exists a constant $C$ independent of $n$ and $\rho$ such that
        \begin{eqnarray}
    % \nonumber to remove numbering (before each equation)
       \|f(x)-\hat{f}(x)\|_{[-1,1],\infty} & \leq & C\rho^{-n}, \label{chebinter2}\\
       \|f'(x)-\hat{f}'(x)\|_{[-1,1],\infty} &\leq& Cn^2 \rho^{-n}, \\
       \|f''(x)-\hat{f}''(x)\|_{[-1,1],\infty} &\leq& C n^4 \rho^{-(n-1)}.
    \end{eqnarray}
\end{lemma}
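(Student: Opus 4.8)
The plan is to reduce both parts of the lemma to two standard ingredients: sharp decay estimates for the Chebyshev coefficients of $f$, and the aliasing identity relating the coefficients of the Chebyshev--Gauss--Lobatto interpolant $\hat f$ to those of $f$. Write the Chebyshev expansion $f(x)=\sum_{j=0}^{\infty}a_jT_j(x)$ (with the usual halving of $a_0$), where $a_j=\frac{2}{\pi}\int_0^\pi f(\cos\theta)\cos(j\theta)\,d\theta$. For interpolation at the $n$ Lobatto nodes the interpolant has degree $n-1$, and its Chebyshev coefficients $\tilde a_j$ obey the aliasing relation $\tilde a_j=a_j+\sum_{m\ge 1}\big(a_{2m(n-1)-j}+a_{2m(n-1)+j}\big)$ for $0\le j\le n-1$ (with the usual endpoint adjustments). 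Hence $f-\hat f=\sum_{j\ge n}a_jT_j+\sum_{j\le n-1}(a_j-\tilde a_j)T_j$, so every quantity in the lemma is controlled by a tail sum $\sum_{j\gtrsim n}|a_j|$ weighted by $\|T_j\|_{[-1,1],\infty}$, $\|T_j'\|_{[-1,1],\infty}$, or $\|T_j''\|_{[-1,1],\infty}$.

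For part (\rnum{1}) I would first bound the coefficients by $V_k=\|f^{(k)}\|_T$. Starting from $a_j=\frac{2}{\pi}\int_{-1}^1\frac{f(x)T_j(x)}{\sqrt{1-x^2}}\,dx$ and integrating by parts repeatedly, each step uses an antiderivative of $T_j/\sqrt{1-x^2}$ of size $O(1/j)$; since the weighted norm $\|u\|_T$ is built from $u'$, one integrates until the weighted integral of $f^{(k+1)}=(f^{(k)})'$ defining $V_k$ appears, which yields a falling-factorial bound $|a_j|\le \frac{2V_k}{\pi}\,\frac{1}{j(j-1)\cdots(j-k)}$ for $j>k$. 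The function estimate then follows from $\|T_j\|_{[-1,1],\infty}=1$ together with the telescoping identity $\sum_{j\ge n}\frac{1}{j(j-1)\cdots(j-k)}=\frac{1}{k\,(n-1)(n-2)\cdots(n-k)}$, which reproduces exactly the denominator and the $1/k$ prefactor of \eqref{chebinter1}. For the derivative bounds I would differentiate the series termwise and insert the sharp endpoint values $\|T_j'\|_{[-1,1],\infty}=j^2$ and $\|T_j''\|_{[-1,1],\infty}=\tfrac13 j^2(j^2-1)$; each derivative cancels two (respectively four) factors of the decaying denominator against this growth, so the effective smoothness index drops from $k$ to $k-2$ and $k-4$, while the prefactors $n$ and $n(n-1)$ arise when the aliased low modes, each carried by a $T_j'$ or $T_j''$ of size up to $O(n^2)$ or $O(n^4)$, are bounded against the same coefficient tail, giving \eqref{chebinterd1}--\eqref{chebinterd2} after a second telescoping.

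For part (\rnum{2}) analyticity replaces algebraic decay by geometric decay. Representing $a_j=\frac{1}{\pi i}\oint f\!\big(\tfrac12(z+z^{-1})\big)z^{-j-1}\,dz$ over a circle whose image is the Bernstein ellipse with parameter $\rho$, and using $|f|\le M$ there, gives the classical estimate $|a_j|\le 2M\rho^{-j}$. Summing the geometric tail $\sum_{j\ge n}2M\rho^{-j}=\frac{2M}{1-\rho^{-1}}\rho^{-n}$ yields \eqref{chebinter2}, and weighting the same tail by $j^2$ and $\tfrac13 j^2(j^2-1)$ together with $\sum_{j\ge n}j^2\rho^{-j}=O(n^2\rho^{-n})$ and $\sum_{j\ge n}j^4\rho^{-j}=O(n^4\rho^{-n})$ produces the $n^2\rho^{-n}$ and $n^4\rho^{-(n-1)}$ forms, the aliasing contribution from the low modes being absorbed into the constant $C$.

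The main obstacle is not the overall mechanism but pinning down the sharp constants and the exact shape of the denominators. The delicate points are the repeated integration by parts against the singular weight $1/\sqrt{1-x^2}$, where one must track the boundary behaviour at $x=\pm1$ and verify that every antiderivative stays $O(1/j)$ so that the steps produce the full falling product, and the bookkeeping in the aliasing sum under differentiation, where the low-order corrections $a_j-\tilde a_j$ must be shown not to dominate the truncation tail. The derivative estimates are genuinely delicate because there the error peaks at the endpoints, exactly where $|T_j'|$ and $|T_j''|$ are largest; a crude uniform bound would lose the stated orders, so one must use the exact endpoint values of the Chebyshev derivatives in concert with the telescoping cancellation to recover the polynomial rates claimed in \eqref{chebinterd1}--\eqref{chebinterd2}.
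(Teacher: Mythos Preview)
The paper does not supply a proof of this lemma at all: it is quoted verbatim as a result of Xiang, Chen and Wang \cite{XIANG2010chebyshev} and used as a black box in the proof of Theorem~\ref{thm2}. So there is no ``paper's own proof'' to compare against.

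That said, your sketch is the standard route and is essentially what the cited reference does: bound the Chebyshev coefficients $a_j$ by repeated integration by parts (finite smoothness) or by the Bernstein-ellipse contour integral (analytic case), then control $f-\hat f$ and its derivatives via the aliasing identity for Lobatto interpolation together with $\|T_j\|_\infty=1$, $\|T_j'\|_\infty=j^2$, $\|T_j''\|_\infty=\tfrac13 j^2(j^2-1)$, and the telescoping tail sum. Your identification of the delicate step --- that the aliased low-mode corrections $a_j-\tilde a_j$, when multiplied by $\|T_j'\|_\infty$ or $\|T_j''\|_\infty$ for $j\le n-1$, must not swamp the truncation tail --- is exactly the point where the argument needs care, and it is precisely this bookkeeping that produces the leading factors $n$ and $n(n-1)$ in \eqref{chebinterd1}--\eqref{chebinterd2}. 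The only caveat is that your proposal is a plan rather than a proof: the exact constants $\frac{4}{k\pi}$, $\frac{4}{(k-2)\pi}$, $\frac{4}{(3k-4)\pi}$ require tracking the integration-by-parts recursion and the aliasing sums explicitly, which you have not done but correctly flag as the remaining work.
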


\begin{theorem}\label{thm2}
    (\rnum{1}) Suppose that $f,f',\ldots,f^{(k-1)}$ are absolutely continuous on $[0,a]$ and $\int_{0}^a\frac{|f^{(k)}(t)|}{\sqrt{at-t^2}}dt=V_k<\infty$ for some $k\geq 1$ , then the numerical integral computed by Algorithm \ref{alg}
    satisfies for $n\geq k+2$ and $k>4$,
\begin{equation}\label{cheberr1}
    E_n(f) \leq C(1+|\ln(w)|) w^{-2}\frac{n(n-1)}{(n-3)(n-4)\ldots(n-k)},
\end{equation}
where $C$ is a constant independent of $n$ and $w$.

    (\rnum{2}) If $f((\cdot+1)a/2)$ is analytic with $|f((z+1)a/2)|\leq M$ in the region bounded by the ellipse with foci $\pm 1$ and major and minor semiaxis lengths summing to $\rho>1$, then  the numerical integral computed by Algorithm \ref{alg} satisfies for each $n\geq 1$,
\begin{equation}\label{cheberr2}
    E_n(f) \leq C(1+|\ln(w)|) w^{-2}n^4\rho^{-(n-1)},
\end{equation}
where $C$ is a constant independent of $n$, $\rho$ and $w$.
\end{theorem}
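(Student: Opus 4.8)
The plan is to reduce everything to the interpolation error bound already established in Theorem \ref{thm1} and then substitute the Chebyshev estimates from Lemma \ref{cheb}. First I would invoke Theorem \ref{feql} exactly as in the proof of Theorem \ref{thm1}: since Algorithm \ref{alg} uses the Chebyshev-Gauss-Lobatto points (which include both endpoints of $[0,a]$ after the affine map $\phi$), the numerical integral equals the Filon approximation, so
\[
E_n(f)=\left|\int_0^a\bigl(f(x)-\hat f(x)\bigr)\log(x)e^{iwx}\,dx\right|,
\]
where $\hat f$ now denotes the degree-$(n-1)$ Chebyshev interpolant. Because both endpoints are among the nodes, I can reuse the endpoint-vanishing argument ($\psi(0)=\psi(a)=0$) that produced the sharper $w^{-2}$ bound \eqref{errbound}, namely
\[
E_n(f)\leq C(1+|\ln(w)|)\,w^{-2}\left(2\|\psi'\|_\infty+\tfrac52 a\|\psi''\|_\infty\right),
\]
with $\psi=f-\hat f$. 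This is the crucial structural input and it holds for arbitrary node sets containing the endpoints, so no new oscillatory-integral estimate is needed.

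The remaining work is to bound $\|\psi'\|_\infty$ and $\|\psi''\|_\infty$ on $[0,a]$ using Lemma \ref{cheb}, which is stated on $[-1,1]$. I would handle the change of variables through $\phi(t)=\tfrac a2 t+\tfrac a2$: writing $F(t):=f(\phi(t))$, the interpolant commutes with the affine map, so $\psi'$ and $\psi''$ on $[0,a]$ are $(2/a)$ and $(2/a)^2$ times the corresponding quantities for $F-\hat F$ on $[-1,1]$. The hypothesis $\int_0^a |f^{(k)}(t)|/\sqrt{at-t^2}\,dt=V_k$ is precisely the $[0,a]$ form of the Chebyshev-weighted norm $\|F^{(k)}\|_T$ up to the scaling from the substitution, so I can identify the finite total-variation constant appearing in Lemma \ref{cheb}. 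For part (\Rnum{1}) I substitute \eqref{chebinterd1} and \eqref{chebinterd2} (valid for $k>4$, $n\geq k+2$); the dominant factor is the second-derivative bound \eqref{chebinterd2}, which scales like $n(n-1)/[(n-3)(n-4)\cdots(n-k)]$, and absorbing the $a$-powers and the constant $4/[(3k-4)\pi]$ into the generic $C$ yields \eqref{cheberr1}. For part (\Rnum{2}) I substitute the analytic estimates, where the $\|f''-\hat f''\|$ bound of order $n^4\rho^{-(n-1)}$ dominates, directly giving \eqref{cheberr2}.

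The main obstacle is bookkeeping rather than any deep difficulty: I must check that the analytic-function hypotheses transfer cleanly under the affine map $\phi$, since the ellipse with foci $\pm1$ in the $t$-plane pulls back to an ellipse with foci $0,a$ in the $x$-plane, so the statement is naturally phrased in terms of $f((\cdot+1)a/2)$ being analytic, exactly as written. I also need the two sharper derivative estimates \eqref{chebinterd1}--\eqref{chebinterd2} to require only $k>4$ and $n\ge k+2$ simultaneously, which is why the theorem states $k>4$; checking that these ranges are compatible with the $n\geq 3$ hypothesis underlying \eqref{errbound} is the one place where care is needed. Once the scaling constants from $\phi$ and the factor $a/2$ in the Jacobian of the integral are correctly tracked and lumped into $C$, both bounds follow immediately from the already-proven inequality \eqref{errbound}.
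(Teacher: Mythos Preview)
Your proposal is correct and follows essentially the same approach as the paper: the paper's proof simply defines $F(t):=f((t+1)a/2)$, records the scaling $\|f'-\hat f'\|_\infty=\tfrac{2}{a}\|F'-\hat F'\|_\infty$ and $\|f''-\hat f''\|_\infty=\tfrac{4}{a^2}\|F''-\hat F''\|_\infty$, and then combines Lemma~\ref{cheb} with the inequality \eqref{errbound} already established in the proof of Theorem~\ref{thm1}. Your write-up is more detailed about the bookkeeping (the affine map, the identification of $V_k$ with $\|F^{(k)}\|_T$, and the ellipse transfer), but the logical skeleton is identical.
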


\begin{proof}
Let $F(t):=f((t+1)a/2), t\in[-1,1]$ and the interpolant of $F$ based on Chebyshev-Gauss-Lobatto points $x_j, j=0,1,\ldots,n-1$
is denoted by $\hat{F}$. It is easily obtained that
\begin{equation*}
     \|f'-\hat{f}'\|_\infty=\frac{2}a \|F'-\hat{F}'\|_\infty, \; \text{and}\;  \|f''-\hat{f}''\|_\infty=\frac{4}{a^2} \|F''-\hat{F}''\|_\infty.
\end{equation*}
The error bounds follows directly by combining the results of Lemma \ref{cheb} and the inequality \eqref{errbound} in the proof of Theorem \ref{thm1}.
\end{proof}

We easily conclude from Theorem \ref{thm2} that the new Levin method possesses the quasi-superalgebaric convergence with respect to the number of collocation points for logarithmically singular and oscillatory integrals with a linear oscillator when the Chebshev points are adopted and $f$ is analytic.
}

In the left of this section, we discuss the approximation error of Algorithm \ref{alg2} for singular and oscillatory integrals with general oscillators.
For this purpose, we first present an equivalent algorithm for Algorithm \ref{alg2}.
Let $Q_{w,n}^{[0,a],L}[f,g]$ denote the numerical algorithm proposed in \cite{LI2008} of the classic Levin method for $I_w^{[0,a]}[f,g]$ based on $n$ collocation points.
\begin{lemma}\label{lemma2}
    If $f(x), g(x)$ is suitably smooth and $g'(x)\neq 0$ for $x\in[0,a]$ , then there exists
    \begin{equation}\label{qeq}
        Q_{\log,w,n}^{[0,a],L}[f,g]=Q_{w,n}^{[0,a],L}[f_1,g]+
        \tilde{Q}_{\log,w,n}^{[0,g(a)],L}[f_2]
    \end{equation}
    where $\tilde{Q}_{\log,w,n}^{[0,g(a)],L}[f_2]$ means the modified algorithm \ref{alg} with $\hat{\Bx}=g(\phi(\Bx_T))$, $f_1(x)= f(x)\log\frac{x}{g(x)}$ for $x\neq 0$ and $f_1(0)=f(0)\log\frac{1}{g'(0)}$ and
    $f_2(x)=\frac{f(g^{-1}(x))}{g'(g^{-1}(x)}$.
\end{lemma}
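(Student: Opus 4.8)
The plan is to prove the identity by reproducing the exact splitting \eqref{nonlinear} at the level of the numerical scheme: I would match the first summand of Algorithm \ref{alg2}'s output \eqref{levinwang2} to $Q_{w,n}^{[0,a],L}[f_1,g]$ and the remaining summands to $\tilde{Q}_{\log,w,n}^{[0,g(a)],L}[f_2]$, and then add. Since \eqref{nonlinear} already gives $I_{\log}[f,g]=I_1(f)+I_2(f)$ with $I_1(f)=\int_0^a f_1(x)e^{iwg(x)}dx$ and $I_2(f)=\int_0^a f(x)\log g(x)e^{iwg(x)}dx$, the only thing at stake is that the numerical operator respects this decomposition term by term.

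The term built from $\Bq$ is the easy one. By Lemma \ref{lemma1} the factor $\log\frac{x}{g(x)}$ is as smooth as $f,g$ permit, so $f_1$ is non-singular and $I_1(f)$ is an ordinary oscillatory integral. In Algorithm \ref{alg2} the vector $\Bq=L^{-1}\Bf_1$ is exactly the Levin--Chebyshev collocation solution of $\bL p=f_1$, and the contribution $\left(e^{iwg(a)}\Be_n^\top-\Be_1^\top\right)\Bq=p_n(a)e^{iwg(a)}-p_n(0)$ (using $g(0)=0$) is by construction the output of the classic Levin algorithm of \cite{LI2008}. Thus this term equals $Q_{w,n}^{[0,a],L}[f_1,g]$ verbatim; this step is a matter of matching definitions.

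The substantive part is to identify the remaining contribution $\left(e^{iwg(a)}\Be_n^\top-\Be_1^\top\right)\left(\Bq_1\log g(a)+\Bh_1\right)+e^{iwg(a)}h_2(a)$ with $\tilde{Q}_{\log,w,n}^{[0,g(a)],L}[f_2]$. For this I would introduce the substitution $u=g(x)$, under which $I_2(f)$ becomes $\int_0^{g(a)}f_2(u)\log u\,e^{iwu}du$, a logarithmically singular integral with the linear oscillator $\ell(u)=u$. The next step is to check that this substitution carries the three defining ODEs \eqref{eq24}, \eqref{eq18}, \eqref{eq19} of the general-oscillator scheme onto the three linear-oscillator ODEs \eqref{eq3}, \eqref{eq5}, \eqref{eq6} applied to $f_2$: using $\bL(e^{-iwg})=0$, the definition \eqref{q22} of $q_2$, and the chain rule, one verifies that whenever $(\tilde q_1,\tilde h_1,\tilde h_2)$ solve the $u$-equations, the compositions $(\tilde q_1\circ g,\tilde h_1\circ g,\tilde h_2\circ g)$ solve the $x$-equations, and in particular $q_1(0)=\tilde q_1(0)$, so the analytic formulas \eqref{eq23} and \eqref{h2} give $h_2(a)=\tilde h_2(g(a))$. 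At the discrete level the collocation nodes correspond through $u_j=g(\hat{x}_j)$, which is precisely the prescription $\hat{\Bx}=g(\phi(\Bx_T))$ in the statement.

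The crux, and the step I expect to be the main obstacle, is to show that this correspondence is exact at the collocation level, i.e. that the node values $\Bq_1,\Bh_1$ produced by solving the system with $L=\frac{2}{a}D+iwG$ in Algorithm \ref{alg2} coincide with those of the linear algorithm for $f_2$ at the nodes $u_j$. The mechanism is that the operator $\bL=\frac{d}{dx}+iwg'$ factors through $u=g(x)$ as $g'\cdot\bigl(\frac{d}{du}+iw\bigr)$; after left-multiplying the transported linear system by $G=\diag(g'(\hat{\Bx}))$ it collapses to the system $L\Bq_1=\Bf$ actually solved in step 4 of Algorithm \ref{alg2}, the same factor $g'$ turning the right-hand side $f_2$ into $f$ and the source $-\tilde q_2$ of \eqref{eq5} into the source $-q_2 g'$ of \eqref{eq18} (this is exactly the factor $G$ inserted in step 5). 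Care will be needed at the singular endpoint $u=0$, where $\log u$, $q_2$ and $\tilde h_2$ are defined by limits, and Lemma \ref{lemma1} must be invoked to guarantee that $q_2$ and $\tilde q_2$ are well defined and that their limiting values at the origin agree. Once the node values are shown to coincide, the boundary evaluations in \eqref{levinwang2} and in the linear-algorithm output \eqref{levinwang} (with $a$ replaced by $g(a)$) use only the endpoint entries selected by $\Be_1^\top$ and $\Be_n^\top$ together with $h_2(a)=\tilde h_2(g(a))$, so the two $I_2$-contributions are identical and the identity \eqref{qeq} follows.
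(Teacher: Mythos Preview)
Your proposal is correct and follows essentially the same route as the paper's proof: split off the $\Bq$-contribution as the classical Levin method applied to $f_1$, then use the substitution $u=g(x)$ to carry the three ODEs \eqref{eq24}, \eqref{eq18}, \eqref{eq19} onto their linear-oscillator counterparts for $f_2$, and verify via the chain rule that collocating at $u_j=g(\hat x_j)$ reproduces exactly the linear system $L\Bq_1=\Bf$, $L\Bh_1=\Bq_2$ of Algorithm~\ref{alg2}. The paper presents this last step by writing the $u$-ODEs directly in the $x$-variable at the nodes $\phi(\Bx_T)$ rather than through your matrix factorization $\bL=g'\,(d/du+iw)$ and left-multiplication by $G$, but the content is identical.
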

\begin{proof}
Applying a change of variables, $y=g(x)$, to the second part of \eqref{nonlinear}, we get that
    \[
    \int_0^af(x)\log(x)e^{iwg(x)}dx=I_{w}^{[0,a]}[f_1,g]+I_{\log,w}^{[0,g(a)]}[f_2,\ell]
    \]
    where $\ell(x)=x$.
    In Algorithm \ref{alg2}, it adopts the classical Levin method, $Q_{w,n}^{[0,a],L}[f_1,g]$, to approximate $I_{w}^{[0,a]}[f_1,g]$.

    Applying Algorithm  \ref{alg} to $I_{\log,w}^{[0,g(a)]}[f_2,\ell]$, we obtain that
\begin{eqnarray}
q_1'(x)+iw q_1(x) &=& f_2(x) \\
  h_1'(x)+iwh_1(x) &=& -q_2(x)  \\
  h_2'(x)+iwh_2(x) &=& -q_1(0)\frac{1-e^{-iwx}}{x} \label{hh22}
\end{eqnarray}
where $
q_2(x)=\frac{q_1(x)-q_1(0)}{x}, x\neq 0,\;\; \text{and} \; q_2(0)= f_2(0)-iwq_1(0).$ Instead of using the collocation points $\hat{\emph{\Bx}}=\phi(\Bx_T)$, we use the points $\hat{\emph{\Bx}}=g(\phi(\Bx_T))$ and
the above equations can be written as
\begin{eqnarray}
\left[(q_1(g(x)))'_x+iw g'(x)q_1(g(x))\right]_{x=\phi(\Bx_T)} &=& f(\phi(\Bx_T)) \\
\left[(h_1(g(x)))'_x+iw g'(x)h_1(g(x))\right]_{x=\phi(\Bx_T)}  &=& -(q_2(g(x))g'(x))_{x=\phi(\Bx_T)}
 %\\ h_2'(x)+iwh_2(x) &=& -q_1(0)\frac{1-e^{-iwx}}{x}
\end{eqnarray}
where $
q_2(g(x))=\frac{q_1(g(x))-q_1(0)}{g(x)-g(0)}, x\neq 0,\;\; \text{and} \; q_2(0)= \frac{f(0)-iwg'(0)q_1(0)} {g'(0)}.$ The linear system above is as the same as that discretized from equations \eqref{eq24} and \eqref{eq18}. They have the same solutions $\Bq_1$ and $\Bh_1$.
Thus, according to Algorithm \ref{alg}, we have that
\[
\begin{split}
\tilde{Q}_{\log,w,n}^{[0,g(a)],L}[f_2]=
\left(e^{iwg(a)}\Be_n-\Be_1\right)\left(\Bq_1\log(g(a)) + \Bh_1\right)+ e^{iwg(a)} h_2(g(a))
\end{split}
\]
where $h_2$ is the solution of \eqref{hh22} which is given in \eqref{h2}.
Comparing with the expression in Algorithm \ref{alg2}, it follows the equivalence \eqref{qeq}.
\end{proof}

{\color{black} The error analysis for Algorithm \ref{alg2} is listed as a theorem. Let $E_n(f,g):=\left|I_{\log,w}^{[0,a]}[f,g]-Q_{\log,w,n}^{[0,a],L}[f,g]\right|$ denote the absolute error.
\begin{theorem}\label{thm3}
Suppose that $f\in C^{n}[0,a]$ and $g\in C^{n+1}[0,a]$ satisfying that $g(0)=0$, $g'(x)>0, x\in [0,a]$ and the assumptions on $g$ in Lemma \ref{xlemma}, $n\geq 2$. Let $f_1(x)= f(x)\log\frac{x}{g(x)}$ for $x\neq 0$ and $f_1(0)=f(0)\log\frac{1}{g'(0)}$ and
    $f_2(x)=\frac{f(g^{-1}(x))}{g'(g^{-1}(x)}$. Then the numerical integral computed by Algorithm \ref{alg2} based on arbitrary points $\hat{\Bx}=\{0\leq\hat{x}_0<\hat{x}_1<\ldots<\hat{x}_{n-1}\leq a\}$ satisfies
\begin{equation}\label{error}
    E_n(f,g)\leq C\frac{(1+|\ln(w)|) w^{-1} } {(n-1)!}\left( \|f_2^{(n)}\|_\infty (g(a))^n+ \|f_1^{(n)}\|a^n+ \|(\bL\hat{p})^{(n)}\|a^n \right),
\end{equation}
where $\hat{p}$ is the numerical solution of $p'(x)+iwg'(x)p(x)=f_1(x)$ by collocation methods.
Specially, when $\hat{x}_0=0$, $\hat{x}_{n-1}=a$ and $n\geq 3$, i.e. both endpoints are included, there exists
\begin{equation}\label{error2}
    E_n(f,g)\leq C\frac{(1+|\ln(w)|)w^{-2}}{(n-2)!} \left( \|f_2^{(n)}\|_\infty (g(a))^{n-1}+ \|f_1^{(n)}\|a^{n-1}+ \|(\bL\hat{p})^{(n)}\|a^{n-1} \right).
\end{equation}
\end{theorem}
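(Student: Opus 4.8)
The plan is to reduce the whole estimate to two ingredients already available, namely Theorem~\ref{thm1} for a linear oscillator and the van der Corput bound of Lemma~\ref{xlemma}, by exploiting the splitting of Lemma~\ref{lemma2}. First I would record the exact counterpart of that algorithmic splitting: the change of variables $y=g(x)$ used in the proof of Lemma~\ref{lemma2} gives
\begin{equation*}
I_{\log,w}^{[0,a]}[f,g]=I_w^{[0,a]}[f_1,g]+I_{\log,w}^{[0,g(a)]}[f_2,\ell],\qquad \ell(x)=x,
\end{equation*}
while Lemma~\ref{lemma2} itself gives $Q_{\log,w,n}^{[0,a],L}[f,g]=Q_{w,n}^{[0,a],L}[f_1,g]+\tilde{Q}_{\log,w,n}^{[0,g(a)],L}[f_2]$. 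Subtracting and applying the triangle inequality yields $E_n(f,g)\leq A+B$, where $A:=\left|I_w^{[0,a]}[f_1,g]-Q_{w,n}^{[0,a],L}[f_1,g]\right|$ is the classic non-singular Levin error for the integrand $f_1$ with the true oscillator $g$, and $B:=\left|I_{\log,w}^{[0,g(a)]}[f_2,\ell]-\tilde{Q}_{\log,w,n}^{[0,g(a)],L}[f_2]\right|$ is the error of Algorithm~\ref{alg} for a linear oscillator on $[0,g(a)]$.

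The term $B$ costs nothing new: it is exactly the quantity controlled by Theorem~\ref{thm1}, applied on $[0,g(a)]$ with $a$ replaced by $g(a)$ and with the mapped nodes $g(\hat{\Bx})$, which are admissible since Theorem~\ref{thm1} permits arbitrary collocation points. This produces the $\|f_2^{(n)}\|_\infty (g(a))^n/(n-1)!$ contribution. For $A$, write $\hat{p}$ for the collocation solution, so the Levin output equals $\int_0^a \bL\hat{p}\,e^{iwg}dx$ and hence $A=\left|\int_0^a \psi\,e^{iwg}dx\right|$ with $\psi:=f_1-\bL\hat{p}$. By construction $\psi$ vanishes at the $n$ collocation nodes, so the first (non-singular) inequality of Lemma~\ref{xlemma} gives $A\leq Cw^{-1}\left(\|\psi\|_\infty+a\|\psi'\|_\infty\right)$; Rolle's theorem together with the interpolation-error representation used in the proof of Theorem~\ref{thm1} then yields $\|\psi\|_\infty\leq \|\psi^{(n)}\|_\infty a^n/n!$ and $\|\psi'\|_\infty\leq \|\psi^{(n)}\|_\infty a^{n-1}/(n-1)!$. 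The key structural point is that $\bL\hat{p}$ is \emph{not} a polynomial of degree $n-1$ for a nonlinear $g$, so $\psi^{(n)}=f_1^{(n)}-(\bL\hat{p})^{(n)}$ does not collapse to $f_1^{(n)}$; this is well defined because $g\in C^{n+1}$ makes $\bL\hat p\in C^n$, and the triangle inequality $\|\psi^{(n)}\|_\infty\leq \|f_1^{(n)}\|+\|(\bL\hat{p})^{(n)}\|$ delivers precisely the remaining two terms of the claimed $w^{-1}$ bound, the crude estimate $1\leq 1+|\ln w|$ absorbing $A$ into the common prefactor.

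For the sharpened $w^{-2}$ estimate I would use that $\hat{x}_0=0$ and $\hat{x}_{n-1}=a$ force $\psi(0)=\psi(a)=0$, and that the mapped nodes $g(\hat{\Bx})$ include the endpoints $0=g(0)$ and $g(a)$, so the both-endpoints case of Theorem~\ref{thm1} applies to $B$ and supplies the $(g(a))^{n-1}/(n-2)!$ term. In $A$ I would integrate $\int_0^a\psi\,e^{iwg}dx$ by parts through $e^{iwg}=(iwg')^{-1}(e^{iwg})'$; the boundary contributions vanish because $\psi(0)=\psi(a)=0$, leaving $-\frac{1}{iw}\int_0^a (\psi/g')'\,e^{iwg}dx$, to which the first inequality of Lemma~\ref{xlemma} is applied a second time to gain the extra factor $w^{-1}$. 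Bounding $(\psi/g')'$ and $(\psi/g')''$ in terms of $\psi,\psi',\psi''$, and then estimating $\|\psi'\|_\infty,\|\psi''\|_\infty$ by Rolle's theorem exactly as in \eqref{errbound}, produces the $w^{-2}$ term with $a^{n-1}/(n-2)!$. I expect the main obstacle to be precisely this last step: unlike the linear case $g'\equiv 1$ of Theorem~\ref{thm1}, the second integration by parts for a genuine oscillator introduces $1/g'$ together with its first two derivatives, and one must verify that these $g$-dependent quantities are uniformly bounded — using $g'>0$ on the compact interval (hence $g'$ bounded away from $0$) and $g\in C^{n+1}$ — so that they are swallowed by the generic constant $C$ without disturbing the stated powers of $a$, $g(a)$, and the factorials $(n-1)!,(n-2)!$.
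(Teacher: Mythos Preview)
Your proposal is correct and follows essentially the same route as the paper: split via Lemma~\ref{lemma2} into the non-singular Levin error $A$ and the linear-oscillator error $B$, handle $B$ by Theorem~\ref{thm1} on $[0,g(a)]$, and handle $A$ via Lemma~\ref{xlemma} together with the interpolation-error/Rolle machinery of Theorem~\ref{thm1}. The only cosmetic difference is that the paper inserts the polynomial interpolant $\hat f_1$ of $f_1$ (which is simultaneously the interpolant of $\bL\hat p$, since both agree at the nodes) and bounds $f_1-\hat f_1$ and $\hat f_1-\bL\hat p$ separately, whereas you work directly with $\psi=f_1-\bL\hat p$ and invoke $\|\psi^{(n)}\|\le\|f_1^{(n)}\|+\|(\bL\hat p)^{(n)}\|$; since $\hat f_1^{(n)}\equiv 0$, the two treatments coincide term by term.
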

\begin{proof} According to Lemma \ref{lemma2},
  \begin{equation*}
  \begin{split}
    E_n(f,g)
    &=\left|\int_0^a\left(f_1(x)-\bL \hat{p}(x)\right) e^{iwg(x)}dx+\int_0^{g(a)}\left(f_2(x)-\hat{f}_2(x)\right)\log (x) e^{iwx}dx\right| \\
    &\leq \left|\int_0^a\left(f_1(x)-\bL \hat{p}(x)\right) e^{iwg(x)}dx \right| +\left|\int_0^{g(a)}\left(f_2(x)-\hat{f}_2(x)\right)\log (x) e^{iwx}dx\right|,
    \end{split}
  \end{equation*}
  where $\hat{f}_2$ be the interpolant of $f_2$ based on the points $\hat{\emph{\Bx}}$. According to the assumptions, $f_1\in C^n[0,a]$ and $f_2\in C^n[0,g(a)]$.
  Let $\hat{f}_1$ be the interpolant of $f_1$ based on the points $ \hat{\emph{\Bx}}$. Note that $\hat{f}_1$ is also the interpolant of $\bL \hat{p}$. Therefore,
  \begin{equation*}
    \left|\int_0^a\left(f_1(x)-\bL \hat{p}(x)\right) e^{iwg(x)}dx \right|\leq \left|\int_0^a\left(f_1(x)-\hat{f}_1(x)\right) e^{iwg(x)}dx \right|+\left|\int_0^a\left( \bL \hat{p}(x)-\hat{f}_1(x)\right) e^{iwg(x)}dx \right|.
  \end{equation*}
  The desired error bounds are obtained by the similar proof of Theorem \ref{thm1} with Lemma \ref{xlemma}.
\end{proof}

Theorem \ref{thm3} tells that the new Levin algorithm for a general oscillator also possesses the same asymptotic order as that for linear oscillator. Numerical experiments in the later section will show that it also has the quasi-superalgebraic convergence for the general oscillator with respect to the number of collocation points when $f$ is smooth.
}

\section{Numerical experiments}
In this section, we present four numerical experiments to verify the efficiency of the proposed new Levin method for singular oscillatory integrals. To achieve this goal, we also compare the
computational performance of the proposed methods with that of the
quadrature rules proposed in \cite{DOMINGUEZ2014,MAXU2017}.
The numerical results presented below were all obtained by using Matlab 2017b on a laptop that has a Intel(R) Core(TM) i7-6500U CPU with 8GB of Ram memory.

\begin{example} In the first example, we consider the moments for Chebyshev pollynomials
\[
\int_{-1}^1T_m(x)\log \left(x^2\right) e^{iwx}dx
\]
whose is the base for the Filon method in computation of the oscillatory integrals with logarithmic singularities. Their exact values can be computed by the recurrence relations \cite{DOMINGUEZ2014}.
\end{example}
These moments can be rewritten as $2\int_{0}^1T_m(x)\log x  e^{iwx}dx+ 2\int_{0}^1T_m(-x)\log x  e^{i(-w)x}dx$ which can be computed by the new Levin method, i.e. by Algorithm \ref{alg}. Since $T_m$ is a polynomial of degree no more than $m$, the moment can be computed exactly by the new Levin method with $m+1$ collocation nodes in theory. We present in Table \ref{Ta1} the absolute errors of the new Levin method in computing the integrals for $m=[2\; 3\; 4\; 5\; 6]$ and $w=[10\; 10^2\; 10^3\; 10^4]$ with $n=m+1$ collocation nodes.
It is found that the absolute errors of the moments computed by the new Levin method attaches the machine precision for different settings of $m$ and $w$. It validates the prediction numerically and also shows the potential efficiency of the new method in computing the oscillatory integrals with logarithmically singularities.

\begin{table}[htb]
\begin{center}
\small
\caption{Absolute errors of Algorithm \ref{alg} for $\int_{-1}^1T_m(x)\log \left(x^2\right) e^{iwx}dx$}
\label{Ta1}
%\begin{tabular}{c|cccc}
%\hline
\def\temptablewidth{1\textwidth} {\rule{\temptablewidth}{1pt}}
\begin{tabular*}{\temptablewidth}{@{\extracolsep{\fill}} ccccc}
$m$& $w=10$  &  $w=10^2$ & $w=10^3$  &  $w=10^4$ \\
\hline
2&$2.4825e-16$&$2.7756e-17$&$1.9395e-18$&$9.6974e-19$\\
3&$2.8475e-16$&$3.2641e-16$&$1.1458e-17$&$5.1824e-19$\\
4&$3.1402e-16$&$6.9389e-17$&$1.4120e-17$&$1.0842e-18$\\
5&$1.0562e-15$&$6.7761e-17$&$8.2217e-18$&$1.4939e-18$\\
6&$5.5511e-16$&$1.2795e-16$&$2.4533e-18$&$8.7411e-19$\\
%\hline
\end{tabular*}
{\rule{\temptablewidth}{1pt}}
\end{center}
\end{table}

{\color{black}
\begin{example}
 This example is aimed to test the dependence of the absolute error of the proposed Levin algorithms, i.e. Algorithms \ref{alg} and \ref{alg2}, on the number of points $n$ and the frequency $w$ by computing the integrals, respectively,
 \[
 \int_0^1e^{x}\log(x)e^{iwx}dx=\frac{-i}{-i+w}\left(\gamma+\Gamma(0,-1-i w)+\Log(-1-iw) \right),
 \]
 and
 \[
 \begin{split}
 \int_0^1(2x+1) & e^{x^2+x}\log(x)e^{iw(x^2+x)}dx \\
 & =  \frac{-i}{-i+w}\left( \gamma+E_1(-2-2iw)+\Log(-1-iw)+e^{2+2iw}\log2\right) \\ &\quad\quad -\int_0^1(2x+1)e^{x^2+x}\log(x+1)e^{iw(x^2+x)}dx,
 \end{split}
 \]
 where $E_1(z):=\int_1^\infty e^{-zt}{t}dt$ is an exponential integral. Since the classic Levin method for oscillatory integrals without singularities is well developed, we use the classic Levin method with 32 points to form the reference value for the second oscillatory integral with a nonlinear oscillator.
\end{example}

Numerical results of absolute errors are shown in Table \ref{Ta2} and Figure \ref{fig1} for different values of $w$ and $n$. For the dependence on $n$, it is shown that the absolute errors of the new Levin method decay drastically, as $n$ increases slowly for fixed $w=10^2$ and $10^5$ no matter the oscillator is linear or nonlinear. It is consistent with the theory that the error possesses the quasi-superalgebraic convergence. As shown in Figure \ref{fig1}, the absolute errors scaled by $\frac{w^2}{1+\log|w|}$ are bounded for fixed $n$ which validates numerically that the asymptotic order on the frequency is $\bO(w^{-2}(1+\log|w|))$ for both linear and nonlinear cases which matches well with the theoretical results in Theorems \ref{thm1} and \ref{thm2}.
\begin{table}[htb]
\begin{center}
\small
\caption{ Absolute errors for computing $\int_0^1e^{x}\log(x)e^{iwx}dx$ (denoted by Linear) and $ \int_0^1(2x+1)  e^{x^2+x}\log(x)e^{iw(x^2+x)}dx$ (denoted by Nonlinear) by the new Levin method for fixed $w$}
\label{Ta2}
%\begin{tabular}{c|cc|c|cc}
%\hline
\def\temptablewidth{1\textwidth} {\rule{\temptablewidth}{1pt}}
\begin{tabular*}{\temptablewidth}{@{\extracolsep{\fill}} cccccc}
\multirow{2}*{\centering $n$}  & \multicolumn{2}{c}{ Linear }  & \multirow{2}*{\centering $n$} & \multicolumn{2}{c}{Nonlinear} \\
\cline{2-3}\cline{5-6}
& $w=10^2$     & $w=10^5$  &   & $w=10^2$ & $w=10^5$  \\
\hline
6&$1.8700e-08$&$4.7101e-14$&8&$1.5615e-06$&$3.4057e-12$\\
7&$8.0027e-10$&$2.0339e-15$&10&$4.1207e-08$&$8.8854e-14$\\
8&$2.9641e-11$&$7.4714e-17$&12&$8.2915e-10$&$1.7505e-15$\\
9&$9.3690e-13$&$2.3115e-18$&14&$1.4946e-11$&$2.7616e-17$\\
10&$2.6924e-14$&$1.9193e-19$&16&$4.1982e-13$&$3.6692e-19$\\
11&$7.4312e-16$&$9.2478e-20$&18&$2.5710e-14$&$8.1948e-20$\\
%\hline
\end{tabular*}
{\rule{\temptablewidth}{1pt}}
\end{center}
\end{table}
\begin{figure}
  \centering
  % Requires \usepackage{graphicx}
  \includegraphics[width=4in]{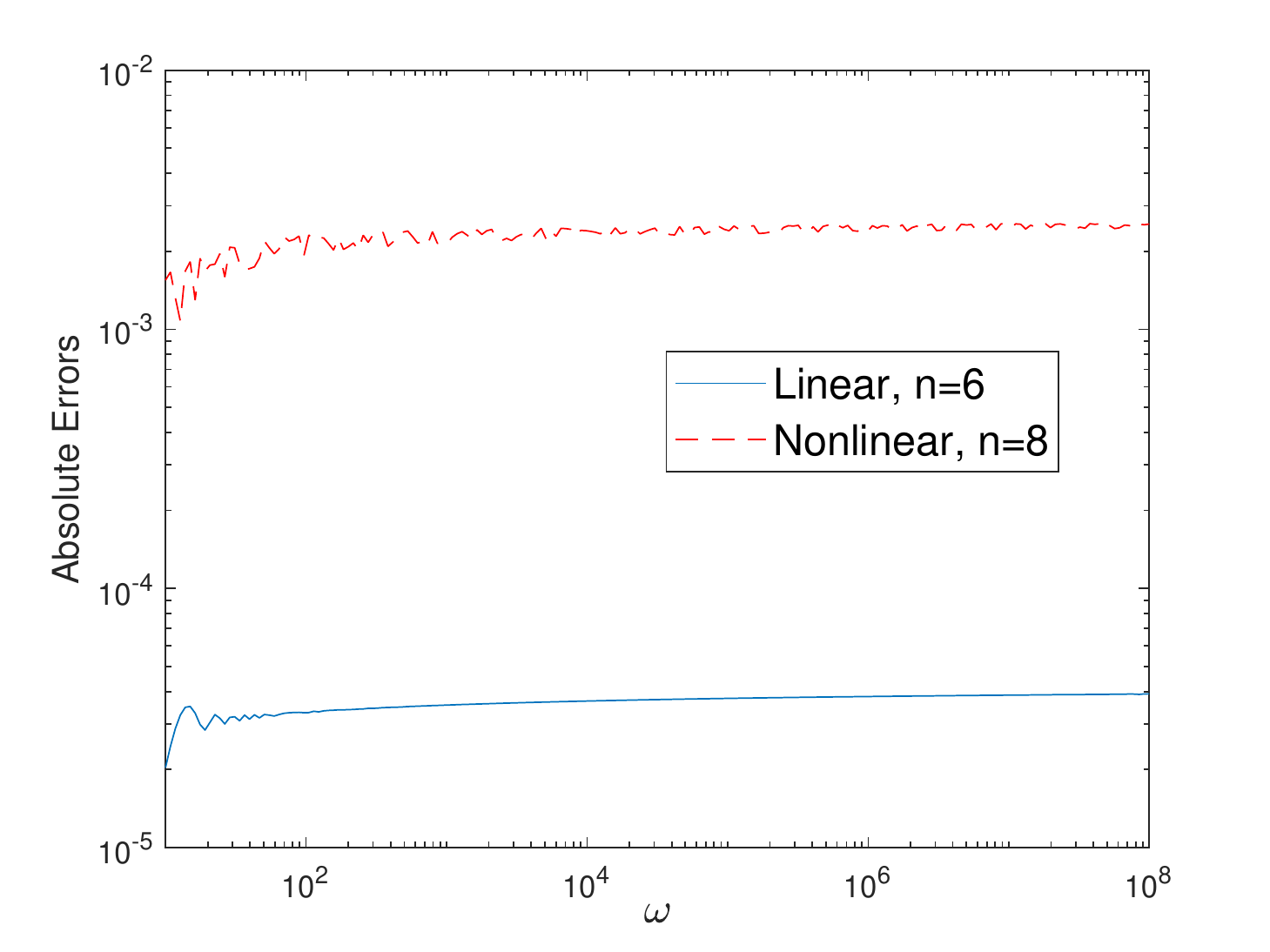}\\
  \caption{ Absolute errors scaled by $\frac{w^2}{1+\log|w|}$ for computing $\int_0^1e^{x}\log(x)e^{iwx}dx$ (denoted by Linear) and $ \int_0^1(2x+1)  e^{x^2+x}\log(x)e^{iw(x^2+x)}dx$ (denoted by Nonlinear) by the new Levin method for fixed $n$ }\label{fig1}
\end{figure}

We next compare the performance of the new Levin method with that
of the existing methods, the Filon-Clenshaw-Curtis method (FCC) in \cite{DOMINGUEZ2014} and the composite moment-free Filon-type quadrature (CMFP) with an polynomial order of convergence in \cite{MAXU2017}. The reason of the choice of CMFP instead of the composite moment-free Filon-type quadrature (CMFE) with an exponential order of convergence is that the CMFE is not stable due to the use of the interpolation of large order in each segment as shown in our numerical experiments which are not presented here.
We then recall the quadrature
formulas of \cite{DOMINGUEZ2014,MAXU2017}. The FCC for the integral $\int_{-1}^1f(x)\log x^2 e^{iwx}dx$ is to compute
\[
Q^{FCC}_{w,n}[f]:=\int_{-1}^1 p_n(x)\log \left(x^2\right) e^{iwx}dx
\]
where $p_n$ is a polynomial of degree $n-1$ which interpolates $f$ at Chebyshev points $\cos\frac{j\pi}{n-1}, j=0,1,\ldots,n-1$.
To introduce the CMFP method, we first simply review the (composite) moment-free Filon-type method \cite{XIANG2007} and the Gauss-Legendre quadrature rule. The moment-free Filon method approximates the integral $\int_a^b f(x)e^{iwg(x)}$ by
\[
Q^{[a,b],MF}_{w,m}[f,g]:=\int_{g(a)}^{g(b)} p_n(x)e^{iwx}dx
\]
where $p_n$ is a polynomial of degree $n-1$ which interpolates $\left[(f/g')\circ g^{-1}\right]$ at $g(t_j),j=0,1,\ldots,m$ and $t_j, j=0,1,\ldots,m$ are a set of distinguish points on $[a,b]$. The composite moment-free Filon-type rules used in CMFP reads
\[
Q^{[a,b],CMF}_{w,n,m}[f,g]:=\sum_{j=1}^{n} Q^{[x_{j-1},x_j],MF}_{w,m}[f,g]\; \text{with}\; x_j=a+\frac{j}{n}(b-a), j=0,1,\ldots,n.
\]
The Gauss-Legendre quadrature rule for integral $\int_a^b f(x)dx$ is given by
\[
Q_m^{[a,b],GL}[f]:=\frac{b-a}{2}\sum_{j=1}^m w_j f\left(\frac{(b-a)t_j+b+a}{2}\right)
\]
where $w_j$ and $t_j$ are the standard weights and points of the Gauss-Legendre rule on the domain $[-1,1]$.
Suppose for a nonnegative integer $r$, the function $g\in C^{r+1}[0,1]$ has a single stationary point at zero and satisfies $g^{(j)}(0)=0$ for $j=1,\ldots,r$ and $g^{(r+1)}(x)\neq 0$ for $x\in[0,1]$. Let $\sigma_r:=\|g^{(r+1)}\|_\infty/(r+1)!$, $w_r:=\max\{k\sigma_r,k\}$, and $\lambda_r:=w_r^{-1/(r+1)}$.   The CMFP method for integral $\int_0^1 f(x)e^{ikg(x)}dx$ is established by
\[
Q_{w,n,s,m_1,m_2}^{CMFP}[f,g]:=\lambda_r\sum_{j=1}^{s-1}Q_{m_1}^{[x_j,x_{j+1}],GL}[\varphi]+ \sum_{j=1}^{n} Q^{[y_{j-1},y_{j}],CMF}_{w,N_j,m_2}[f,g]
\]
where $x_0=0$, $x_j=(j/s)^p, p=(2m+1)/(1+\mu)$, $\mu$ is the index of singularity of $f$, $j=1,2,\ldots,s$, $\varphi(x)=f(\lambda_r x)e^{iwg(\lambda_r x)}$,
$y_j=w_r^{(j-n)/n/(r+1)}, j=0,1,\ldots,n$, $N_j=\lceil q_j^{m/(m-1)}\rceil$, $q_j=\max\{|g'(y_{j-1})|,|g'(y_j)|\}x_{j-1}/g(x_{j-1})$, $j=1,2,\ldots,n$ and $\nu$ is the index of singularity of $\left[(f/g')\circ g^{-1}\right]$. When $f$ has only the logarithmic singularity, the value of $\mu$ is set to be 0.

\begin{example} This example is to validate the efficiency of the new Levin algorithm for a linear oscillator, i.e. Algorithm \ref{alg}, by comparing with the FCC and the CMFP. For this purpose, we consider an integral with a complicate integrand
\[
\int_{-1}^1\frac{\cos(4x)}{x^2+x+1}\log \left(x^2\right) e^{iwx}dx
\]
which is also considered in \cite{DOMINGUEZ2014}. The reference value of the integral is obtained by Mathematica with 50 digits.
\end{example}

\begin{table}[htb]
\begin{center}
\small
\caption{Comparison of relative errors of the new Levin method, the FCC and the CMFP for integral $\int_{-1}^1\frac{\cos(4x)}{x^2+x+1}\log \left(x^2\right) e^{iwx}dx$}
\label{Ta4}
\begin{threeparttable}
%\begin{tabular}{c|ccc|ccc}
%\hline
\def\temptablewidth{1\textwidth} {\rule{\temptablewidth}{1pt}}
\begin{tabular*}{\temptablewidth}{@{\extracolsep{\fill}} cccc|ccc}
\multirow{2}*{\centering $n$}  & \multicolumn{3}{c|}{$w=10^2$}  &  \multicolumn{3}{c}{$w=10^3$} \\
\cline{2-7}
& Levin\tnote{1}   & FCC\tnote{2}  & CMFP\tnote{3}  & Levin  &  FCC & CMFP  \\
\hline
16&$3.5678e-09$&$1.5751e-08$&$1.3552e-06$&$5.8255e-10$&$1.4464e-09$&$1.5382e-06$\\
18&$2.4022e-10$&$1.0089e-10$&$1.4756e-08$&$4.7055e-11$&$8.9554e-12$&$3.2035e-08$\\
20&$2.2788e-11$&$2.5227e-11$&$3.9169e-10$&$3.7534e-12$&$2.4413e-12$&$2.4922e-09$\\
22&$2.0498e-12$&$3.5052e-13$&$5.5345e-12$&$2.9866e-13$&$2.9043e-14$&$9.3250e-10$\\
24&$8.4779e-14$&$4.9937e-14$&$7.5142e-14$&$2.4534e-14$&$3.7966e-15$&$7.3508e-11$\\
26&$3.2518e-15$&$7.4980e-16$&$1.1957e-14$&$3.4336e-15$&$1.5461e-16$&$5.9831e-13$\\
28&$2.7006e-15$&$2.2195e-16$&$1.0252e-14$&$1.4372e-15$&$1.3869e-16$&$6.0480e-14$\\
%\hline
\end{tabular*}
{\rule{\temptablewidth}{1pt}}
\begin{tablenotes}
        \footnotesize
        \item[1] Levin: $Q_{\log,w,n}^{[0,1],L}[f_1]+Q_{\log,-w,n}^{[0,1],L}[f_2]$, where $f_1(x)=2\cos(4x)(x^2+x+1)^{-1}$ and $f_2(x)=f_1(-x)$;
        \item[2] FCC: $Q^{FCC}_{w,2(n-3)}[f_1]$;
        \item[3] CMFP: $Q_{w,n_1,n_1,4,4}^{CMFP}[f_1\log(\cdot)]+Q_{-w,n_1,n_1,4,4}^{CMFP}[f_2\log(\cdot)]$, where $n_1=2^{n/2-3}$.
      \end{tablenotes}
\end{threeparttable}
\end{center}
\end{table}

\begin{table}[htb]
\begin{center}
\small
\caption{Comparison of CPU time of the new Levin method, the FCC and the CMFP for integral $\int_{-1}^1\frac{\cos(4x)}{x^2+x+1}\log \left(x^2\right) e^{iwx}dx$ (The settings for each method are as the same as those in Table \ref{Ta4})}
\label{Ta5}
%\begin{tabular}{c|ccc|ccc}
%\hline
\def\temptablewidth{1\textwidth} {\rule{\temptablewidth}{1pt}}
\begin{tabular*}{\temptablewidth}{@{\extracolsep{\fill}} cccc|ccc}
\multirow{2}*{\centering $n$}  & \multicolumn{3}{c|}{$w=10^2$}  &  \multicolumn{3}{c}{$w=10^3$} \\
\cline{2-7}
& Levin   & FCC  & CMFP  & Levin  &  FCC & CMFP  \\
\hline
16&$2.7733e-02$&$1.6734e-01$&$1.9811e-02$&$2.9702e-02$&$1.5733e-01$&$8.9944e-03$\\
18&$2.6089e-02$&$1.6675e-01$&$1.4163e-02$&$2.5042e-02$&$1.6759e-01$&$1.4238e-02$\\
20&$3.1365e-02$&$1.6556e-01$&$2.6576e-02$&$2.7267e-02$&$1.6112e-01$&$2.5190e-02$\\
22&$2.9330e-02$&$1.6962e-01$&$6.1037e-02$&$3.7453e-02$&$1.5765e-01$&$4.3996e-02$\\
24&$3.6097e-02$&$1.6582e-01$&$1.0179e-01$&$2.6806e-02$&$1.8146e-01$&$8.0632e-02$\\
26&$3.7774e-02$&$1.8236e-01$&$1.8171e-01$&$2.7309e-02$&$1.7427e-01$&$1.6384e-01$\\
28&$3.6684e-02$&$1.6819e-01$&$4.2856e-01$&$2.5199e-02$&$1.4903e-01$&$3.4393e-01$\\
%\hline
\end{tabular*}
{\rule{\temptablewidth}{1pt}}
\end{center}
\end{table}

We present in Table \ref{Ta4} and \ref{Ta5} the relative errors and  the CPU time for different values of $w$ and of $n$ by using different methods, respectively. To illustrate the dependence of each method on $n$, we introduce some notations.
Setting $f_1(x)=2\frac{\cos(4x)}{x^2+x+1}$, $f_2(x)=f_1(-x)$ and $n_1=2^{n/2-3}$, there exists
\[
\int_{-1}^1\frac{\cos(4x)}{x^2+x+1}\log \left(x^2\right) e^{iwx}dx=\int_{0}^1f_1(x)\log \left(x\right) e^{iwx}dx+\int_{0}^1f_2(x)\log \left(x\right) e^{-iwx}dx.
\]
For a given $n$ in Table \ref{Ta4} , the proposed Levin method, FCC and CMFP compute the integral through $Q_{\log,w,n}^{[0,1],L}[f_1]+Q_{\log,-w,n}^{[0,1],L}[f_2]$, $Q^{FCC}_{w,2(n-3)}[f_1]$ and $Q_{w,n_1,n_1,4,4}^{CMFP}[f_1\log(\cdot)]+Q_{-w,n_1,n_1,4,4}^{CMFP}[f_2\log(\cdot)]$, respectively.
The settings for the CMFP is chosen according to those used in \cite{MAXU2017}.
 The results in Table \ref{Ta4} show that the accuracy of the proposed Levin method is comparable with that of the FCC and is better than the CMFP. For the CPU time, it is shown in Table \ref{Ta5} that the proposed Levin method outperforms the other two methods. Hence, the new Levin method is more efficient in computing oscillatory integrals with a linear oscillator.

\begin{example}
This example is to confirm the efficiency of the new Levin algorithm for a nonlinear oscillator, i.e. Algorithm \ref{alg2} by considering an integral
\[
\int_{0}^1\log x e^{\frac{iw}{3}\left(2x+\sin\frac{\pi x}{2}\right)}dx
\]
which is considered in \cite{MAXU2017}. The reference value of the integral is obtained by Mathematica with 50 digits. Since the moments are unknown, the FCC is not applicable in this example and then we compare only with the CMFP.
\end{example}

Numerical results of the relative errors and  the CPU time are shown in Table \ref{Ta6} and \ref{Ta7} for different values of $w$ and of $n$ by using different methods, respectively. Setting $f(x)=1$ and $g(x)=\left(2x+\sin\frac{\pi x}{2}\right)/3$,
the proposed Levin method  and the CMFP are implemented for a given $n$ in Table \ref{Ta6} through $Q_{\log,w,n}^{[0,1],L}[f,g]$ and $Q_{w,n_1,n_1,4,4}^{CMFP}[f\log(\cdot),g]$, respectively, where $n_1=2^{n/2-1}$.
It is shown clearly that the new proposed method is more accurate than the CMFP and cost less computation time.
 Therefore, the new method is also more efficient in dealing with oscillatory integrals with a nonlinear oscillator.
\begin{table}[htb]
\begin{center}
\small
\caption{Comparison of relative errors of the new Levin method and the CMFP for integral $\int_{0}^1\log x e^{\frac{iw}{3}\left(2x+\sin\frac{\pi x}{2}\right)}dx$}
\label{Ta6}
\begin{threeparttable}
%\begin{tabular}{c|cc|cc|cc}
%\hline
\def\temptablewidth{1\textwidth} {\rule{\temptablewidth}{1pt}}
\begin{tabular*}{\temptablewidth}{@{\extracolsep{\fill}} ccc|cc|cc}
\multirow{2}*{\centering $n$}  & \multicolumn{2}{c|}{$w=10^2$}&  \multicolumn{2}{c|}{$w=10^3$}  &  \multicolumn{2}{c}{$w=10^4$} \\
\cline{2-7}
& Levin\tnote{1}   & CMFP\tnote{2}   & Levin & CMFP &  Levin  & CMFP  \\
\hline
12&$8.1378e-10$&$7.7319e-07$&$5.8942e-11$&$3.1475e-07$&$6.8270e-12$&$7.5954e-07$\\
14&$2.7196e-11$&$1.5810e-08$&$2.4841e-12$&$3.7577e-08$&$2.8367e-13$&$3.2036e-08$\\
16&$3.6545e-13$&$1.9381e-10$&$6.7204e-14$&$8.7799e-10$&$7.0379e-15$&$2.0101e-09$\\
18&$1.5204e-14$&$2.8810e-12$&$7.2421e-16$&$2.8126e-10$&$7.7043e-16$&$4.9874e-11$\\
20&$1.1551e-15$&$4.5143e-14$&$1.8171e-15$&$3.8299e-12$&$6.5950e-16$&$4.7109e-12$\\
22&$2.1164e-15$&$3.4286e-15$&$1.1856e-15$&$9.4738e-14$&$2.4220e-15$&$3.0953e-13$\\
24&$5.1056e-15$&$6.0634e-15$&$6.5045e-16$&$2.0395e-14$&$2.9516e-15$&$1.3564e-14$\\
%\hline
\end{tabular*}
{\rule{\temptablewidth}{1pt}}
\begin{tablenotes}
        \footnotesize
        \item[1] Levin: $Q_{\log,w,n}^{[0,1],L}[f,g]$, where $f(x)=1$ and $g(x)=\left(2x+\sin\frac{\pi x}{2}\right)/3$;
        \item[2] CMFP: $Q_{w,n_1,n_1,4,4}^{CMFP}[f\log(\cdot),g]$, where $n_1=2^{n/2-1}$.
      \end{tablenotes}
\end{threeparttable}
\end{center}
\end{table}

\begin{table}[htb]
\begin{center}
\small
\caption{Comparison of CPU time of the new Levin method and the CMFP for integral $\int_{0}^1\log x e^{\frac{iw}{3}\left(2x+\sin\frac{\pi x}{2}\right)}dx$ (The settings for each method are the same as those in Table \ref{Ta6})}
\label{Ta7}
%\begin{tabular}{c|cc|cc|cc}
%\hline
\def\temptablewidth{1\textwidth} {\rule{\temptablewidth}{1pt}}
\begin{tabular*}{\temptablewidth}{@{\extracolsep{\fill}} ccc|cc|cc}
\multirow{2}*{\centering $n$}  & \multicolumn{2}{c|}{$w=10^2$}&  \multicolumn{2}{c|}{$w=10^3$}  &  \multicolumn{2}{c}{$w=10^4$} \\
\cline{2-7}
& Levin   & CMFP    & Levin & CMFP &  Levin  & CMFP  \\
\hline
12&$1.7341e-02$&$2.6221e-02$&$1.5810e-02$&$2.3357e-02$&$1.3076e-02$&$1.9889e-02$\\
14&$1.7615e-02$&$2.6740e-02$&$1.4048e-02$&$2.4913e-02$&$1.4106e-02$&$2.2531e-02$\\
16&$1.4016e-02$&$2.8904e-02$&$1.4972e-02$&$3.1208e-02$&$1.3430e-02$&$2.6690e-02$\\
18&$1.4940e-02$&$4.4752e-02$&$1.4144e-02$&$3.8594e-02$&$1.3570e-02$&$3.4280e-02$\\
20&$1.7483e-02$&$6.8332e-02$&$1.4892e-02$&$5.1830e-02$&$1.3344e-02$&$5.1740e-02$\\
22&$1.6228e-02$&$8.8271e-02$&$1.4012e-02$&$8.9576e-02$&$1.4798e-02$&$9.0808e-02$\\
24&$1.4908e-02$&$1.7009e-01$&$1.8327e-02$&$1.7746e-01$&$1.9131e-02$&$1.7763e-01$\\
%\hline
\end{tabular*}
{\rule{\temptablewidth}{1pt}}
\end{center}
\end{table}

Besides, extra numerical results show that when the number of $n_1$ increases up to $2^{14}$, the error of CMFP in Examples 4.3 and 4.4 starts to increase for $w=10^2$ which is due to the round-off error of tiny meshes while the new method is free of the problem caused by tiny meshes.
}

\section{Conclusions}
We have constructed a numerically stable Levin method for computing highly oscillatory
integrals with logarithmically singularities, which does not require knowledge of the
derivatives of $f$. This method retains the most vital computational property: higher frequency requires less work.
The proposed method possesses the asymptotic order with respect to $w$ of $\bO(w^{-2}(1+\log|w|))$ and the quasi-superalgebraic convergence when $f$ is analytic.
As shown in the algorithms, it only needs the comparable computation cost of the classic Levin method.

In the future, we hope to generalize these results for computing oscillatory integrals
with other singularities and stationary points.

%\section{Acknowledgement}
%This research is partially supported by the Construct Program of the
%Key Laboratory of High Performance Computing and Stochastic Information Processing.

%\bibliographystyle{plain}% ¡°standard¡± styles, plain, unsrt, abbrv and alpha.
%\bibliography{Report_9_2}

\end{CJK}
\end{document}